\theoremstyle{plain}
\newtheorem{theorem}{Theorem}[section] 
\newtheorem{proposition}[theorem]{Proposition} 
\theoremstyle{plain} 
\newtheorem{definition}{Definition}[section] 
\theoremstyle{definition} 
\newtheorem{remark}{Remark}[section] 
\title{How do Conservative Backbone Curves Perturb into Forced Responses? A Melnikov Function Analysis} 
\author{Mattia Cenedese and George Haller
\thanks{Corresponding author: \href{mailto:georgehaller@ethz.ch}{georgehaller@ethz.ch}}\vspace{0.35cm}\\ 
Institute for Mechanical Systems, ETH Z\"urich,
\\ Leonhardstrasse 21, 8092 Z\"urich, Switzerland}
\date{}
\begin{document}
\maketitle
\begin{abstract}
\noindent Weakly damped mechanical systems under small periodic forcing tend to exhibit periodic response in a close vicinity of certain periodic orbits of their conservative limit. Specifically, amplitude frequency plots for the conservative limit have often been noted, both numerically and experimentally, to serve as backbone curves for the near resonance peaks of the forced response. In other cases, such a relationship between the unforced and forced response was not observed. Here we provide a systematic mathematical analysis that predicts which members of conservative periodic orbit families will serve as backbone curves for the forced-damped response. We also obtain mathematical conditions under which approximate numerical and experimental approaches, such as energy balance and force appropriation, are justifiable. Finally, we derive analytic criteria for the birth of isolated response branches (isolas) whose identification is otherwise challenging from numerical continuation. 
\end{abstract}
\section{Introduction}
Conservative families of periodic orbits, broadly known as nonlinear normal modes (NNMs) in the field of structural engineering, often appear to shape the behaviour of mechanical systems even in the presence of additional damping and time-periodic forcing. Not only has this influence been noted for low-amplitude vibrations of a small number of coupled oscillators, but it also appears to hold for large amplitude motion in arbitrary degrees of freedom. Various descriptions of this phenomenon are available in the literature, ranging from the original introduction of NNMs by Rosenberg \cite{Rosenberg1962} to the more recent reviews lead by Vakakis \cite{Vakakis1997,Vakakis2001,Vakakis2008}, Avramov and Mikhlin \cite{Avramov2011,Avramov2013} and Kerschen \cite{Kerschen2014}.
\begin{figure}[t]
\centering
\includegraphics[width=1\textwidth]{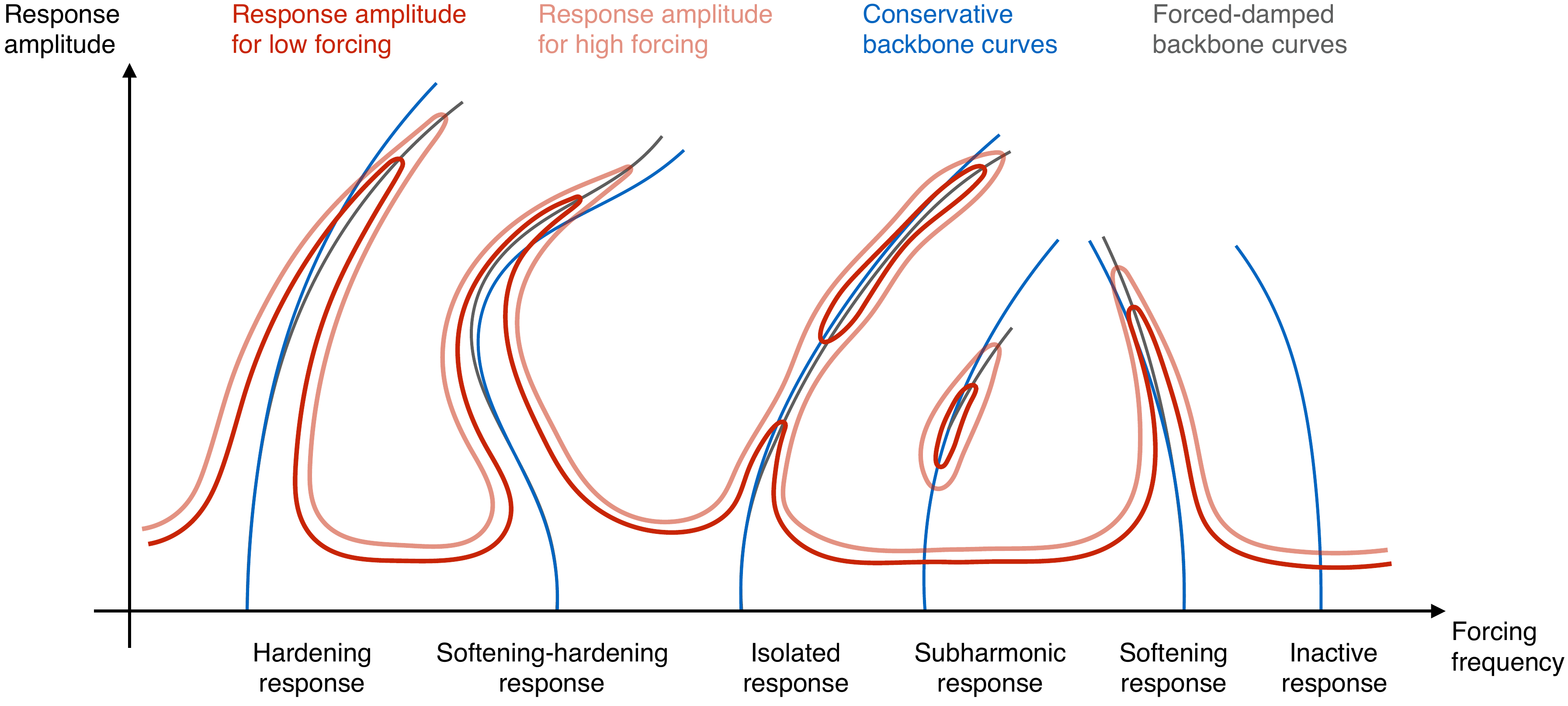}
\caption{Illustration of frequency response phenomena in mechanical systems. The dark and light red curves identify the frequency response for low and high forcing amplitudes, respectively, while blues curves depict conservative backbone curves and grey curves represent forced-damped backbone curves.}
\label{fig:S1_IM1}
\end{figure}
These studies suggest that forced-damped frequency responses might bifurcate from conservative NNMs. To summarise features of such bifurcations, Fig. \ref{fig:S1_IM1} qualitatively represents possible nonlinear phenomena in the frequency response. Dark and light red curves show steady-state solutions for a periodically forced-damped mechanical system for low and high forcing amplitudes, respectively. Blue curves correspond to amplitude-frequency relations of periodic orbit families of the underlying conservative system, which we refer to as conservative backbone curves. Grey curves depict backbone curves of the forced-damped response, defined as the frequency locations of amplitude maxima under variation of the forcing amplitude \cite{NayfehM2007}. The first and last peaks in Fig. \ref{fig:S1_IM1} show the classic hardening and softening resonance trends respectively. As most frequently observed behaviours, these two phenomena have been broadly studied: see, e.g. \cite{NayfehM2007,Kerschen2014} for analytical and numerical treatments and \cite{Peeters2011b,Szalai2017} for experimental results. In these settings, as the response amplitude increases, the backbone curves of the conservative limit and those of the forced-damped system have been noted to pull apart \cite{Touze2006}. 

The backbone curves of the second peak from the left in Fig. \ref{fig:S1_IM1} feature a non-monotonic trend in frequency, i.e., softening for lower amplitudes and hardening for higher ones. This type of behaviour is relevant for shallow-arch systems \cite{Sombroek2018}, MEMS devices \cite{Polunin2016} and structural elements \cite{Carpineto2014}, while a simple mechanical example is available at \cite{Mojahed2018,Liu2019}. The third peak of Fig. \ref{fig:S1_IM1} shows an isolated response curve (isola) that may occur due to the influence of nonlinear damping \cite{Habib2018} or symmetry breaking mechanisms \cite{Hill2016}. In the former case, the isola can join with the main branch when the forcing amplitude exceeds a certain threshold \cite{Ponsioen2019}, as indicated by the red light curve. Subharmonic responses, displayed along the second blue line from the right, also show up as isolated branches, as they cannot originate from the linear limit \cite{NayfehM2007}. Finally, as highlighted in \cite{Hill2017} with the analysis of a simple model of nonlinear beam, not all NNMs contribute to shaping the forced-response, as illustrated by the rightmost backbone curve in Fig. \ref{fig:S1_IM1}.

Analytic relations between conservative families of periodic orbits and frequency responses are only available for specific, low-dimensional oscillators from perturbation expansions that assume the conservative periodic limit to have small amplitudes. These expansions may arise from the method of multiple scales \cite{NayfehM2007}, averaging \cite{Sanders2007}, the first-order normal form technique \cite{Touze2004} or the second-order normal form technique \cite{Neild2011}. Based on this latter method, Hill \textit{et al.} \cite{Hill2015} developed an energy-transfer formulation for locating maxima of the frequency response along conservative backbone curves. However, the authors a priori postulate a relation between conservative oscillations and frequency responses, and also discuss potential limitations arising from this assumption in \cite{Hill2015}. Vakakis and Blanchard in \cite{Vakakis2018} show the exact steady states of a strongly nonlinear periodically forced and damped Duffing oscillator. They also clarify how these forced-damped periodic motions are related the conservative backbone curve, but they restrict the discussion to specific types of periodic forcing.

Relying on exact mathematical results, spectral submanifold (SSM) theory \cite{Haller2016,Breunung2018} has been developed for the local analysis of damped, nonlinear oscillators. This approach is insensitive to the number of degrees of freedom thanks to the automated procedure developed in \cite{Ponsioen2018} and can be hence used for exact nonlinear model reduction. SSMs, however, do not exist in the limit of zero forcing and damping, in which case they are replaced by Lyapunov subcenter manifolds (LSMs) \cite{Lyapunov1992}. The relationship between the dynamics on damped, unforced SSMs and LSMs is now established in a small enough neighbourhood of the unforced equilibrium point \cite{Breunung2018,Ponsioen2019}.

Even though diverse numerical options are available to explore forced responses, analytical tools applicable to arbitrary degrees of freedom and motion amplitudes are still particularly valuable. Not only can such tools help with the analysis of several perturbation types by relying only on the knowledge of conservative orbits, but they can also overcome limitations of numerical routines. For a thorough review of these limitations, we refer the reader to \cite{Renson2016a}. For example, direct numerical integration needs long computational time for high degrees of freedom systems with small damping and it is limited to stable periodic orbits. Despite being very accurate, numerical continuation (shooting methods \cite{Peeters2009}, harmonic balance \cite{Grolet2012} or collocation \cite{Dankowicz2013}) suffers from the curse of dimensionality. Furthermore, it can efficiently compute the main branch of the forced response for frequencies away from resonance, but fails to find isolated branches when their existence and location is not a priori known.

In a parallel development, SSMs, backbone curves, main and isolated branches can now be computed efficiently for general, forced-damped, multi-degree-of-freedom mechanical systems up to any required order of accuracy \cite{Ponsioen2019}. This approach also yields analytic approximations for backbone curves and isolas, as long as these stay within the domain of convergence of Taylor expansions constructed for the forced-damped SSMs \cite{Ponsioen2019}.

Beyond these numerical approaches, experimental methods would also be aided by a rigorous mathematical relation between conservative backbone curves and forced-damped responses. One of such methods has been developed by Peeters \textit{et al.} \cite{Peeters2011a,Peeters2011b}, who propose an extension of the phase-lag quadrature criterion, well-known for linear systems, to nonlinear systems in order to isolate conservative NNMs experimentally. Their method assumes that the nonlinear periodic motion is synchronous \cite{Rosenberg1962}, the damping is linear (or at least odd in the velocity) and the excitation is multi-harmonic and distributed. The idea is to subject the system to forcing that exactly balances damping along a periodic orbit of the conservative limit. Peeters \textit{et al.} find that such a balance holds approximately when each harmonic of the conservative periodic orbit has a phase lag of $90^\circ$ relative to the corresponding forcing one. Force appropriation \cite{Ehrhardt2016,Peter2018} and control-based continuation \cite{Renson2016b} exploit the phase-lag quadrature criterion for systematic tracking of backbone curve. 

Another related experimental method in need of a mathematical justification is resonance decay \cite{Peeters2011a}, which uses a force appropriation routine to isolate a periodic orbit, then turns off the forcing and assumes the response to converge to the equilibrium position along a two-dimensional SSM, sometimes called a damped-NNM \cite{Kerschen2009}. This technique is expected to provide an approximation of conservative backbone curves, but it remains partially unjustified for two reasons. On the analytical side, it assumes a yet unproven purely, parasitic effect of damping on the response. On the experimental side, decoupling the shaker from the system remains a challenge that affects the accuracy of the results. 

Despite available experimental and numerical observations, it is unclear if and when conservative NNMs perturb into forced-damped periodic responses. This is because periodic orbits in conservative systems are never structurally stable under generic perturbations, which tend to destroy them \cite{GH1983}. Indeed, any conservative periodic orbit has at least two Floquet multipliers equal to $+1$ due to the conservation of energy \cite{MO2017}, rendering the orbit structurally unstable. Classic analytic approaches \cite{Malkin1949,Loud1959,Farkas1994} to generic, non-autonomous perturbations of normally hyperbolic periodic orbits are therefore inapplicable in this setting. Conservative NNMs exist in families that are only guaranteed to persist under small, smooth conservative perturbations \cite{MO2017,MoserZehnder2005}.

In its simplest form, the study of dissipative perturbations on a conservative family of periodic orbits dates back to Poincaré \cite{Poincare1892}, developed further by Arnold \cite{Arnold1964}. An important contribution was made by Melnikov \cite{Melnikov1963}, who focused on dissipative perturbations of planar Hamiltonian systems. His approach reduces the persistence problem of periodic orbits to the analysis of the zeros of the \textit{subharmonic Melnikov function} \cite{GH1983,Yagasaki1996}. As extensions of Melnikov's approach, studies on two-degree-of-freedom Hamiltonian systems are available: \cite{Veerman1985,Veerman1986} consider the fate of periodic orbit families in an integrable system subject to Hamiltonian perturbations, while \cite{Yagasaki1999} analyses two fully decoupled oscillators under generic dissipative perturbations. Subharmonic Melnikov-type analysis for non-smooth systems is also available; see \cite{Kunze2000} for examples of planar oscillators and \cite{Shaw1989a,Shaw1989b} for a system with two degrees of freedom. All these results, therefore, require low-dimensionality and integrability before perturbation, neither of which is the case for the conservative limits of nonlinear structural vibrations problems arising in practice. 

As an alternative to these analytic methods, Chicone \cite{Chicone1994,Chicone1995,ChiconeR2000} established a perturbation method for manifolds of isochronous periodic orbits without any restriction on their Floquet multipliers or assumptions on integrability/coupling before perturbation. This elegant approach exploits the Lyapunov-Schmidt reduction to obtain a generic multi-dimensional bifurcation function for the persistence of single orbits. Furthermore, this method has also been extended for non-smooth (but Lipschitz) dynamical systems \cite{Buica2012}. However, these results are not directly applicable to the typical setting of nonlinear structural vibrations. Moreover, an exact resonance condition is required in Chicone's method, even though perturbed periodic orbits are often observed when the forcing frequency clocks in near-resonance with the frequency of a periodic orbit of the conservative limit.

In this paper, we develop an exact analytical criterion for the perturbation of conservative NNMs into forced-damped periodic responses, thereby predicting the variety of behaviours depicted in Fig. \ref{fig:S1_IM1}. We assume that the conservative limit of the system has a one-parameter family of periodic orbits, satisfying generic nondegeneracy conditions. We then study the persistence or bifurcation of these periodic orbits under small damping and time-periodic forcing. Utilising ideas from Rhouma and Chicone \cite{ChiconeR2000}, we reduce this perturbation problem to the study of the zeros of a Melnikov-type function, generalising therefore the original Melnikov method to multi-degree-of-freedom systems. Our approach relies on the smallness of dissipative and forcing terms which is generally satisfied in structural dynamics, but we will not assume that the unperturbed periodic orbit has small amplitude. This distinguishes our approach from various classic perturbation expansions that assume closeness from the unforced equilibrium. Our analysis also differ from classic Melnikov-type approaches in that it does not require the conservative limit of the system to be integrable. Indeed, for our unperturbed conservative limit, we only require the existence of a generic family of periodic orbits that may only be known from numerical continuation. 

When our Melnikov-type method is applied to mechanical systems, it provides a rigorous justification for the classic energy principle, a broadly used but heuristic necessary condition for the existence of periodic response in forced-damped nonlinear oscillations \cite{Hill2016,Hill2015,Peeters2011a}. Our analysis shows that under further conditions, the energy principle becomes a rigorous sufficient condition for nonlinear periodic response and extends to arbitrary number of degrees of freedom, multi-harmonic forcing, large-amplitude periodic orbits and higher-order external resonances.

We first give a mathematical formulation for general dynamical systems, then apply it to the classic setting of nonlinear structural vibrations. In that context, our results reveal how the near-resonance part of the main and isolated branches of the periodic response diagram are born out of the conservative backbone curve. We also discuss how our results justify the phase-lag quadrature criterion under more general conditions than prior studies assumed. Finally, we illustrate the power of our analytic predictions on a six-degree-of-freedom mechanical system.

\section{Setup}
\label{sec:S2}
We consider a mechanical system with $N$ degrees of freedom and denote its generalised coordinates by $q\in U\subset \mathbb{R}^N$, $N\geq 1$. We assume that this system is a small perturbation of a conservative limit that conserves the total energy $H:U\times\mathbb{R}^{N} \rightarrow\mathbb{R}$, defined as
\begin{equation}
H(q,\dot{q})=E(q,\dot{q})+V(q)=\frac{1}{2}\langle\, \dot{q} \, , \, M(q)\dot{q} \rangle + V(q) .
\end{equation}
Here, $M(q)$ is the positive definite, symmetric mass matrix, $E(q,\dot{q})$ is the kinetic energy and $V(q)$ the potential. The equations of motion for the system take the form
\begin{equation}
\label{eq:MechSys}
M(q)\ddot{q}+G(q,\dot{q})+DV(q)=\varepsilon Q(q,\dot{q},t;T,\varepsilon),
\end{equation}
where $\varepsilon\geq 0$ is the perturbation parameter, $G(q,\dot{q})=D_t(M(q))\dot{q}-D_qE(q,\dot{q})$ contains inertial forces and $\varepsilon Q(q,\dot{q},t;T,\varepsilon)=\varepsilon Q(q,\dot{q},t+T;T,\varepsilon)$ denotes a small perturbation of time-period $T$. System in Eq. \ref{eq:MechSys} is then a weakly non-conservative mechanical system.

Introducing the notation $x=(q,\dot{q})\in\mathbb{R}^n$ with $n=2N$, the equivalent first-order form reads
\begin{equation}
\label{eq:NAutSysG}
\dot{x}=f(x)+\varepsilon g(x,t;T,\varepsilon) ,
\end{equation}
where we assume that $f\in C^r$ with $r\geq 2$, while $g$ is $C^{r-1}$ in $t$ and $C^{r}$ with respect to the other arguments. These vector fields are defined as
\begin{equation}
\label{eq:FGDef}
\begin{array}{lcr}
\displaystyle f(x)= \begin{pmatrix} \dot{q} \\ -M^{-1}(q)(DV(q)+G(q,\dot{q})) \end{pmatrix}, & &
\displaystyle g(x,t;T,\varepsilon)= \begin{pmatrix} 0 \\ M^{-1}(q)Q(q,\dot{q},t;T,\varepsilon)  \end{pmatrix}.
\end{array}
\end{equation}
We assume any further parameter dependence in our upcoming derivations to be of class $C^r$. Trajectories of (\ref{eq:NAutSysG}) that start from $\xi\in\mathbb{R}^n$ at $t=0$ will be denoted with $x(t;\xi,T,\varepsilon)=(q(t;\xi,T,\varepsilon),\dot{q}(t;\xi,T,\varepsilon))$. We will also use the shorthand notation $x_0(t;\xi)=(q_0(t;\xi,),\dot{q}_0(t;\xi,))=x(t;\xi,T,0)$ for trajectories of the unperturbed (conservative) limit of system (\ref{eq:NAutSysG}). We recall that, for $\varepsilon=0$, energy is conserved, i.e.,  $H(x_0(t;\xi))=H(\xi)$ holds as long as $x_0(t;\xi) \in U$.

\section{Non-autonomous resonant perturbation of normal families of conservative periodic orbits}
\label{sec:S3}
In this section, we first state our main mathematical results for single conservative orbits then for orbit families. We also discuss the physical meaning of these results in the context of mechanical systems. 

For the $\varepsilon=0$ limit of system (\ref{eq:NAutSysG}), we assume that there exists a periodic orbit $\mathcal{Z}\subset U$ of minimal period $\tau>0$ and we denote by $\Pi(p)$ the monodromy matrix\footnote{The monodromy matrix, or linearised period-$\tau$ mapping, $\Pi(p):\mathbb{R}^n\rightarrow\mathbb{R}^n$ is defined as $\Pi(p)=Y(\tau;p)$ where $Y$ solves the equation of variations \cite{Chicone2000}
	\begin{equation*}
	 \dot{Y}=Df(x_0(t;p))Y, \,\,\,\,\,\,\,\,\,\,\, Y(0)=I.
	\end{equation*} } 
based at any point $p\in\mathcal{Z}$. We consider $m\in\mathbb{N}^+$ multiples of the period and let $\mu_{a,m}$ denote the algebraic multiplicity of the $+1$ eigenvalue of $\Pi^m(p)$ and $\mu_{g,m}$ denote its geometric multiplicity. Note that these two multiplicities are invariant under translations along the orbit, while they may change for different values of $m$. We will need the following definition from \cite{Sepulchre1997}.
\begin{definition}
\label{def:mNPO}
A conservative periodic orbit $\mathcal{Z}$ is $m$-\textit{normal} if one of the following holds:
	\begin{enumerate}[label=\textit{(\alph*)}]
	\item $\mu_{g,m}=1$ ;
	\item $\mu_{g,m}=2$ and $f(p)\notin $ range$(\Pi^m(p)-I)$ .	
\end{enumerate}
\end{definition}
This normality is a nondegeneracy condition under which a one-parameter family, $\mathcal{P}$, of $m$-normal periodic solutions of the vector field $f$ emanates from $\mathcal{Z}$ (see Theorem 4 of \cite{Sepulchre1997} or Theorem 7 of \cite{Doedel2003}). We denote with $\lambda\in\mathbb{R}$ the parameter identifying each individual orbit in the family $\mathcal{P}$.
\begin{figure}[t]
\centering
\includegraphics[width=1\textwidth]{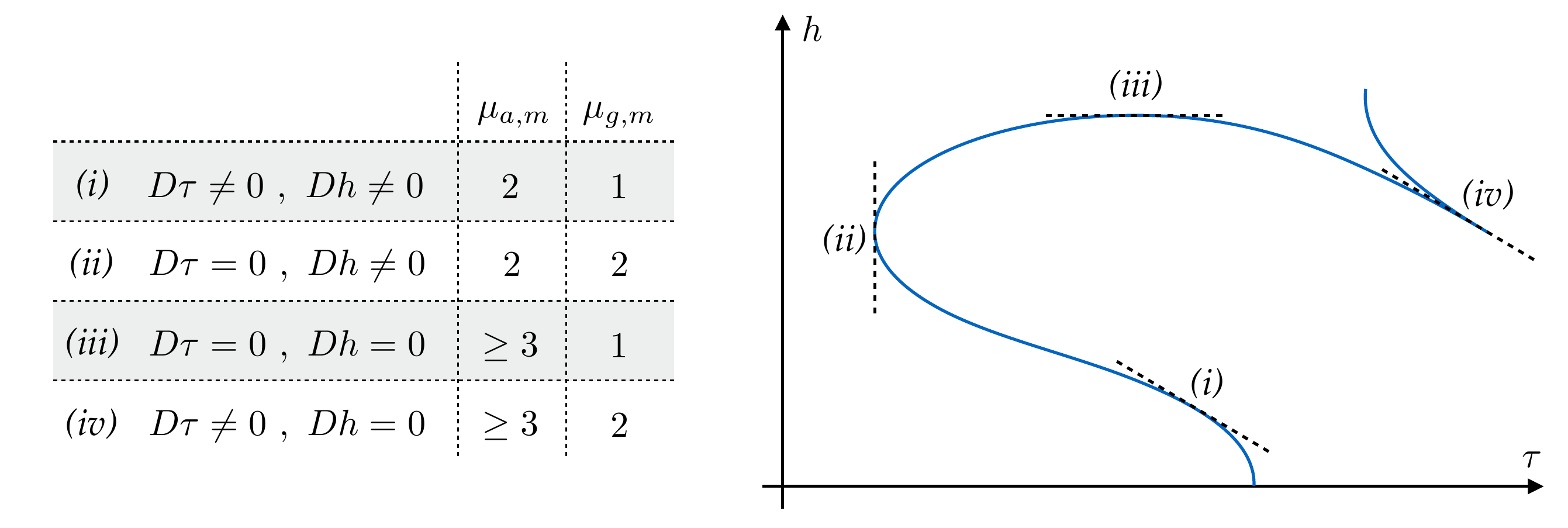}
\caption{Different types of $m$-normal periodic orbits and the associated geometry of the backbone curve, i.e., the relation between the energy $h$ of the periodic response the period $\tau$ of the response.}
\label{fig:S3_IM1}
\end{figure}

Figure \ref{fig:S3_IM1} describes the types of $m$-normal periodic orbits covered by Definition $\ref{def:mNPO}$, with their associated backbone-curve geometry, as given in Theorem 5 of \cite{Sepulchre1997}. The backbone curve can be parametrised as $(\tau(\lambda),h(\lambda))$, with $\tau$ denoting the orbit period and $h$ the value of the first integral. The value of the parameter $\lambda$ is given by a scalar mapping $\lambda=L(\xi,\tau)$ depending on the initial condition $\xi\in\mathbb{R}^n$ and the period $\tau\in\mathbb{R}^+$. We also require $L$ to be invariant under translations of $\xi$ along the orbit. For an $m$-normal periodic orbit belonging to case \textit{(a)} in Definition \ref{def:mNPO}, one can simply choose $L(\xi,\tau)=\tau$. Instead, when $\mu_{a,m}=2$ (see \textit{(i)} and \textit{(ii)} in Fig. \ref{fig:S3_IM1}), the orbit family can be locally parametrised with the value of the first integral $h$, i.e., $L(\xi,\tau)=H(\xi)$. Other possible parametrisations include the value of a coordinate determined by a Poincaré section, the $L^2$ norm of the trajectory or the maximum value of a coordinate along the trajectory. For continuation through cusp points of backbone curves, i.e., \textit{(iv)} in Fig. \ref{fig:S3_IM1}, $L$ may be chosen to provide a pre-defined relation between energy and period (see \cite{Sepulchre1997}), but that is outside the scope of this paper.

\subsection{Perturbation of a single orbit}
\label{sec:S3_1}
Our starting point in the analysis of the fate of perturbed periodic orbits is the displacement map
\begin{equation}
\label{eq:DisplMap}
\Delta_{l}:\mathbb{R}^{n+2}\rightarrow\mathbb{R}^n,\,\,\,\,\,\,\,\,\,\,\, \Delta_{l}(\xi,T,\varepsilon)=x(lT;\xi,T,\varepsilon)-\xi ,\,\,\,\,\,\,\,\,\,\,\,\Delta_{l}\in C^r,
\end{equation}
whose zeros correspond to $lT$-periodic orbits for system (\ref{eq:NAutSysG}) for $l\in\mathbb{N}^+$. We aim to smoothly continue normal periodic orbits in the family $\mathcal{P}$ that exists at $\varepsilon=0$. We consider an $m$-normal periodic orbit $\mathcal{Z}\subset\mathcal{P}$ and assume that $l$ and $m$ are relatively prime integers, i.e., $1$ is their only common divisor. We then look for zeros of (\ref{eq:DisplMap}) that can be expressed as
\begin{equation}
\label{eq:orbeclosex}
\xi=x_0(s;p)+O(\varepsilon),\,\,\,\,\,\,\,\,\,\,\, p\in\mathcal{Z},\,\,\,\,\,\,\,\,\,\,\, T=\tau m/l+O(\varepsilon),
\end{equation}
under the additional constraint
\begin{equation}
\label{eq:orbeclose}
L(\xi,lT)=L(p,m\tau).
\end{equation}
Equation (\ref{eq:orbeclose}) represents a resonance condition as it relates, either explicitly or implicitly, the periods of the perturbation with that of the periodic orbit $\mathcal{Z}$. With the notation $L(p,m\tau)=\lambda$, the zero problem to be solved reads
\begin{equation}
\label{eq:DisplMapC}
\Delta_{l,L}:\mathbb{R}^{n+2}\rightarrow\mathbb{R}^{n+1},\,\,\,\,\,\,\,\,\,\,\, \Delta_{l,L}(\xi,T,\varepsilon)=\begin{cases}
\Delta_{l}(\xi,T,\varepsilon)\\
L(\xi,lT)-\lambda
\end{cases}, \,\,\,\,\,\,\,\,\,\,\,\Delta_{l,L}(\xi,T,\varepsilon)=0 .
\end{equation}
Defining the smooth, $L$-independent, $m\tau$-periodic function $M^{m:l}:\mathbb{R}\rightarrow\mathbb{R}$ as
\begin{equation}
\label{eq:MelFun}
\displaystyle M^{m:l}(s)=\int_0^{m\tau} \big\langle\, DH(x_0(t+s;p))\, , 
\, g(x_0(t+s;p),t;\tau m/l,0)\,\big\rangle\, dt ,
\end{equation}
we obtain the following main result.
\begin{theorem}
	\label{thm:stpert}
	If the Melnikov function $M^{m:l}(s)$ has a simple zero at $s_0\in\mathbb{R}$, i.e.,
	\begin{equation}
	\label{eq:transvzero}
	\begin{array}{lcr}
	M^{m:l}(s_0)=0, & \,\,\,\,& DM^{m:l}(s_0)\neq 0,
	\end{array}
	\end{equation}
	then the $m$-normal periodic orbit $\mathcal{Z}$ of the $\varepsilon=0$ limit smoothly persists in system (\ref{eq:NAutSysG}) for small $\varepsilon>0$. Moreover, in this case, there exists at least another topologically transverse zero in the interval $(s_0,\, s_0+m\tau)$. If $M^{m:l}(s)$ remains bounded away from zero, then $\mathcal{Z}$ does not smoothly persists for small $\varepsilon>0$.
	\end{theorem}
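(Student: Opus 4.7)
My plan is to reduce the persistence problem for the augmented zero equation (\ref{eq:DisplMapC}) to a scalar bifurcation equation via a Lyapunov--Schmidt reduction, and then to identify that equation at leading order with the Melnikov function $M^{m:l}$. Near the unperturbed solution $(\xi, T, \varepsilon) = (x_0(s;p), \tau m/l, 0)$, the constraint $L(\xi, lT) = \lambda$ freezes the orbit of the family $\mathcal{P}$, so the only remaining one-parameter degeneracy of unperturbed zeros is the phase $s \in [0, m\tau)$ along $\mathcal{Z}$, i.e., $\Delta_{l,L}(x_0(s;p), \tau m/l, 0) \equiv 0$. Persistence is therefore equivalent to finding zeros of a smooth scalar bifurcation function $F(s,\varepsilon)$; since $F(s,0) \equiv 0$, I write $F = \varepsilon \tilde F$ and reduce the problem to the zeros of $\tilde F(s,0) = \partial_\varepsilon F(s,0)$.

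The reduction rests on analysing $D_{(\xi,T)}\Delta_{l,L}$ at the unperturbed solution. The $\xi$-block is $\Pi^m(x_0(s;p))-I$, with one-dimensional kernel (spanned by $f(x_0(s;p))$) in case $(a)$ of Definition \ref{def:mNPO} and two-dimensional kernel in case $(b)$; the $T$-block is $l\, f(x_0(s;p))$. In case $(a)$ the $L$-equation simply pins down $T$ and leaves an $n\to n$ problem with one-dimensional kernel (the phase) and one-dimensional cokernel. In case $(b)$ the condition $f(p)\notin\mathrm{range}(\Pi^m(p)-I)$ makes the $T$-column independent of the $\xi$-range, while the row contributed by $L=H$ is linearly independent of the second left-null-space direction via the symplectic identity $\langle DH,f\rangle\equiv 0$ together with the fact that $f$ has a nonzero component along the transverse left-null direction. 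The augmented $(n+1)\times(n+1)$ Jacobian therefore again has one-dimensional kernel and cokernel, and the implicit function theorem on the complementary subspaces produces the bifurcation function $F(s,\varepsilon)$.

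To identify $\partial_\varepsilon F(s,0)$ I exploit the Hamiltonian structure of the $\varepsilon=0$ flow: since $H$ is conserved, $\Pi^m(p)^T DH(p)=DH(p)$, so $(DH(x_0(s;p)),0)$ spans the cokernel of the augmented Jacobian up to a nonzero scalar in both cases. The cleanest route to the Melnikov integral is via the energy balance
\begin{equation*}
H(x(lT;\xi,T,\varepsilon)) - H(\xi) = \varepsilon \int_0^{lT} \big\langle\, DH(x(t;\xi,T,\varepsilon))\,,\, g(x(t;\xi,T,\varepsilon), t; T, \varepsilon)\,\big\rangle\, dt,
\end{equation*}
whose left-hand side vanishes on any periodic orbit of (\ref{eq:NAutSysG}); expanding at $\xi=x_0(s;p)$, $T=\tau m/l$, $\varepsilon=0$ recovers, at order $\varepsilon$, precisely $M^{m:l}(s)$ of (\ref{eq:MelFun}). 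An equivalent variation-of-parameters computation uses $Y(m\tau,t)^T DH(x_0(s;p)) = DH(x_0(t+s;p))$, which expresses the fact that $DH$ satisfies the adjoint variational equation along the Hamiltonian flow. In either case, $\tilde F(s,0) = c\, M^{m:l}(s)$ for some nonzero constant $c$.

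The three conclusions follow at once. A simple zero $s_0$ of $M^{m:l}$ is a simple zero of $\tilde F(\cdot,0)$, so by the implicit function theorem it continues smoothly to a zero $s(\varepsilon)$ for small $\varepsilon>0$, yielding the persistent perturbed orbit. Since $M^{m:l}$ is $m\tau$-periodic and changes sign across any simple zero, the intermediate value theorem forces a second sign-changing (topologically transverse) zero in $(s_0, s_0+m\tau)$. If $M^{m:l}$ is bounded away from zero, continuity in $\varepsilon$ keeps $\tilde F$ bounded away from zero, ruling out smooth continuation. The main obstacle I expect is the Lyapunov--Schmidt bookkeeping in case $(b)$ of $m$-normality: verifying that $f(p)\notin\mathrm{range}(\Pi^m(p)-I)$ and the choice $L=H$ conspire to leave exactly one kernel and cokernel direction, with the cokernel generated by $(DH(x_0(s;p)),0)$ rather than by the gradient of the $L$-constraint.
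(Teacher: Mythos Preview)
Your approach is essentially the same as the paper's: a Lyapunov--Schmidt reduction of the augmented displacement map $\Delta_{l,L}$ to a scalar bifurcation function whose leading order is $M^{m:l}$, followed by the implicit function theorem at simple zeros and the intermediate value theorem for the second transverse zero. The paper packages the reduction differently, though: instead of arguing abstractly about kernel/cokernel dimensions of the augmented Jacobian, it proves an explicit factorisation $R(z)(\Pi^m(z)-I)K(z)$ (Proposition~\ref{prop:fact}) that isolates an invertible $(n-2)\times(n-2)$ block $A_r(z)$ and a scalar shear coefficient $\tau_v$ distinguishing cases \textit{(a)} and \textit{(b)} of Definition~\ref{def:mNPO}. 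After the coordinate change $\xi=z+K_{\mathbb{T}\mathcal{Z}}(z)\hat\delta$, $lT=m\tau+\hat\sigma$ and the rescaling $(\hat\delta,\hat\sigma)=\varepsilon(\delta,\sigma)$, the invertibility of the resulting matrix $A(z)$ is checked by a direct determinant computation using the identity $\langle D_\xi L,v\rangle + ml\tau_v D_T L = 1/\|D\vartheta_{\mathcal{S}}\|$, which handles both normality cases at once and for any admissible $L$, not just $L=H$. Your case-\textit{(b)} bookkeeping is correct in outline (the $T$-column kills the second cokernel direction via $f\notin\mathrm{range}(\Pi^m-I)$, and the $L=H$ row kills the second kernel direction via $\langle DH,v\rangle\neq 0$), but the paper's factorisation makes this rank computation mechanical. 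For identifying the leading order, the paper uses variation of parameters together with the adjoint identity $DH(x_0(t+s;p))=DH(x_0(s;p))Y^{-1}(t+s;p)$; your energy-balance shortcut is equivalent and is in fact how the paper interprets $M^{m:l}$ physically in Section~\ref{sec:S3_3}.
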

\begin{figure}[t]
\centering
\includegraphics[width=1\textwidth]{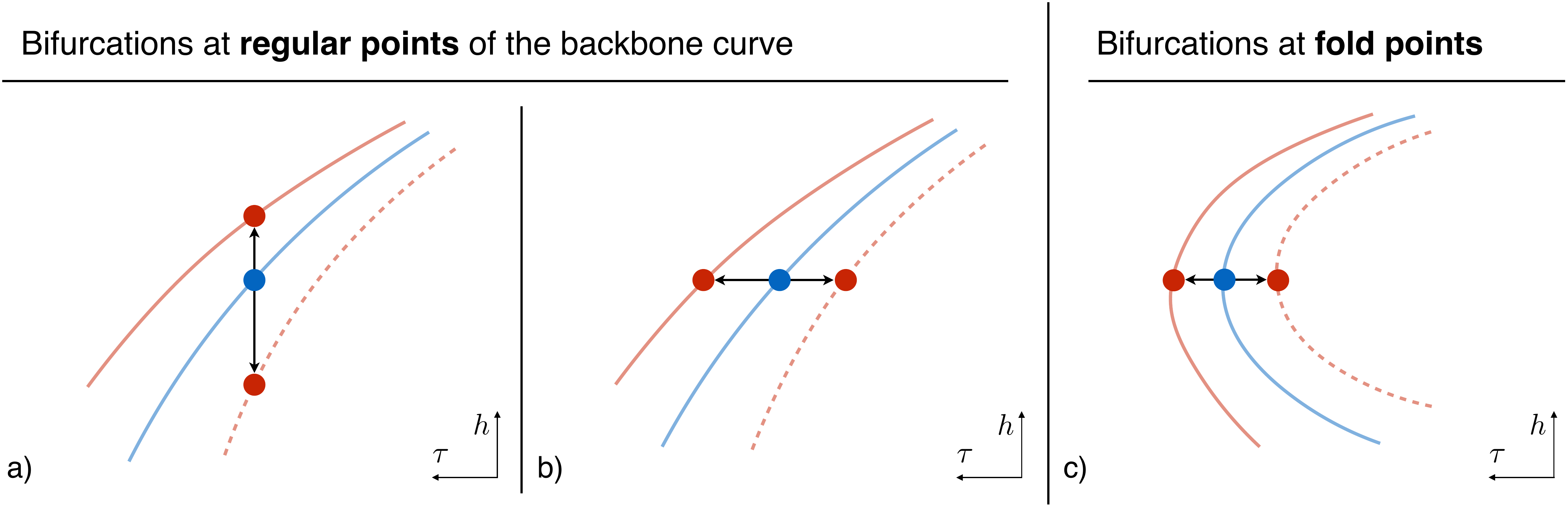}
\caption{Bifurcations in case the Melnikov function (\ref{eq:MelFun}) has two simple zeros. Regular points of the backbone curve generate perturbed solutions either in the isochronous (a) or isoenergetic (b) directions. In contrast, in case (c), perturbed solutions are guaranteed to exist in the isoenergetic direction for a fold point in $\tau$. Blue lines identify conservative backbone curves while red lines mark perturbed periodic orbits. Solid and dashed lines identify different local branches of solutions.}
\label{fig:S3_IM2}
\end{figure}

We prove Theorem \ref{thm:stpert} in Appendix \ref{app:A1}. The proof reduces the $(n+1)$-dimensional persistence problem to the analysis of the zeros of the scalar function (\ref{eq:MelFun}). This function formally agrees with the one derived originally by Melnikov for a planar oscillator, but the proof for $n>2$ is more involved compared to the simple geometric treatment in \cite{GH1983} for $n=2$. 

When the Melnikov function has a simple or transverse zero, the perturbed orbit emanating from the $m$-normal periodic orbit $\mathcal{Z}$ and its period are $O(\varepsilon)$-close to $\mathcal{Z}$ and to $\tau m$, respectively. Since topologically transverse zeros of functions are generically simple, we expect from Theorem \ref{thm:stpert} that an even number of perturbed periodic orbits bifurcates from the $m$-normal periodic orbit at the $\varepsilon=0$ limit, as indeed typically observed in literature. Moreover, since the Melnikov function (\ref{eq:MelFun}) does not depend on the parametrisation function $L$ used in Eq. (\ref{eq:orbeclose}), Theorem \ref{thm:stpert} and its consequences hold for any possible parametrising direction used for the unperturbed periodic orbit family.

Theorem \ref{thm:stpert} can be interpreted directly in terms of the backbone curve of $\mathcal{P}$ and the frequency response of system (\ref{eq:NAutSysG}).  Suppose that the backbone curve of $\mathcal{P}$ shows only regular points near the $m$-normal periodic orbit $\mathcal{Z}$ so that we can select $L(p,m\tau)=m\tau=\lambda$. In this case, Eq. (\ref{eq:orbeclose}) imposes the exact resonance condition $lT=m\tau$. For a pair of simple zeros of $M^{m:l}$, Theorem \ref{thm:stpert} guarantees that the point in the backbone curve corresponding to $\mathcal{Z}$ bifurcates in two frequency responses along the isochronous direction, as depicted in Fig. \ref{fig:S3_IM2}(a). If, instead, $\mathcal{Z}$ corresponds to a fold point in $\tau$, then Theorem \ref{thm:stpert} does not hold for this choice of $L$, but we can still use the energy $h$ as parametrisation variable. In that case, our perturbation method constrains the perturbed initial condition $\xi$ to lie in the same energy level as that of $\mathcal{Z}$. At the same time, the time period $T$ for the perturbed orbit is $O(\varepsilon)$-close to $\tau m/l$, i.e., a near-resonance condition is satisfied. For two simple zeros of $M^{m:l}$, $\mathcal{Z}$ can be smoothly continued in two frequency responses along the isoenergetic direction, as shown in Fig. \ref{fig:S3_IM2}(b) and \ref{fig:S3_IM2}(c).  
\begin{remark}
\label{rmk:icandstab}
While for the classic planar oscillator case the Melnikov function is also able to predict the stability of perturbed orbits \cite{Yagasaki1996}, the stability analysis of persisting periodic orbits is more involved for $n>2$. Indeed, their stability depends on all the Floquet multipliers of the conservative limit, as well as on the nature of the perturbation.
\end{remark}

\subsection{Perturbation of a family and parameter continuation}
\label{sec:S3_2}
Here we consider an additional parameter $\kappa\in\mathbb{R}$ in Eq. (\ref{eq:DisplMapC}), where $\kappa$ is either a feature of the vector fields in system (\ref{eq:NAutSysG}) or the family parameter $\lambda$. The Melnikov function $M^{m:l}$ in (\ref{eq:MelFun}) clearly inherits this smooth parameter dependence.

Next we investigate the fate of the $m$-normal periodic orbit $\mathcal{Z}$ in the family $\mathcal{P}$ for which the Melnikov function features a quadratic zero at $(s_0,\kappa_0)$ defined as:
\begin{equation}
\label{eq:qtanzero}
\begin{array}{lcr}
\displaystyle M^{m:l}(s_0,\kappa_0)=D_{s} M^{m:l}(s_0,\kappa_0)=0, & \,\,\,\,& \displaystyle D^2_{ss} M^{m:l}(s_0,\kappa_0)\neq 0.
\end{array}
\end{equation}
The following theorem describes what generic bifurcations may arise in this setting.
\begin{theorem}
\label{thm:ftpert}
Assume that $M^{m:l}(s,\kappa)$ has a quadratic zero at $(s_0,\kappa_0)$, as defined in Eq. (\ref{eq:qtanzero}). If $D_{\kappa} M^{m:l}(s_0,\kappa_0)\neq 0$, then $\kappa_{sn}=\kappa_0+O(\varepsilon)$ is a bifurcation value at which a saddle-node bifurcation of periodic orbits occurs. If $D_{\kappa} M^{m:l}(s_0,\kappa_0)=0$ and $\mathrm{det}(D^2M^{m:l}(s_0,\kappa_0))>0$ (resp. $<0$), then isola births (resp. simple bifurcations) arise for small $\varepsilon>0$.
\end{theorem}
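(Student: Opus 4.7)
The plan is to piggyback on the Lyapunov--Schmidt reduction underlying the proof of Theorem~\ref{thm:stpert}. That reduction produces a $C^r$ scalar bifurcation function $F(s,\kappa,\varepsilon)$ whose zeros parametrise the perturbed periodic orbits near $\mathcal{Z}$, with $F(s,\kappa,0)\equiv 0$ and $\partial_\varepsilon F(s,\kappa,0)$ equal to $M^{m:l}(s,\kappa)$ (up to an irrelevant nonzero constant). For small $\varepsilon>0$ the zeros of $F$ therefore coincide with the zeros of the smooth function
\[
\tilde{F}(s,\kappa,\varepsilon)\;=\;M^{m:l}(s,\kappa)+\varepsilon\,R(s,\kappa,\varepsilon),
\]
and the theorem reduces to describing the zero set of an $O(\varepsilon)$-perturbation of $M^{m:l}$ in the $(s,\kappa)$-plane near the degenerate point $(s_0,\kappa_0)$.

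In the saddle-node case I would argue as follows. Because $\partial^2_{ss}\tilde{F}(s_0,\kappa_0,0)=D^2_{ss}M^{m:l}(s_0,\kappa_0)\neq 0$, the implicit function theorem applied to $\partial_s\tilde{F}=0$ yields a smooth local critical branch $s=s^*(\kappa,\varepsilon)$ with $s^*(\kappa_0,0)=s_0$. Setting $h(\kappa,\varepsilon)=\tilde{F}(s^*(\kappa,\varepsilon),\kappa,\varepsilon)$ and using $\partial_s\tilde{F}=0$ along the branch, one gets $\partial_\kappa h(\kappa_0,0)=D_\kappa M^{m:l}(s_0,\kappa_0)\neq 0$. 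A second implicit function theorem then produces a unique smooth curve $\kappa_{\mathrm{sn}}(\varepsilon)=\kappa_0+O(\varepsilon)$ on which $h$ vanishes. Along this curve the reduced equation is in the classical fold normal form ($\tilde{F}=\partial_s\tilde{F}=0$, $\partial^2_{ss}\tilde{F}\neq 0$, and $\partial_\kappa\tilde{F}\neq 0$ by continuity), so the Sotomayor criterion identifies $\kappa_{\mathrm{sn}}(\varepsilon)$ as a saddle-node bifurcation value for the one-parameter family $\kappa\mapsto\{\tilde{F}(\cdot,\kappa,\varepsilon)=0\}$, producing the expected pairwise birth of perturbed periodic orbits.

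In the degenerate case $D_\kappa M^{m:l}(s_0,\kappa_0)=0$, non-singularity of the Hessian makes $(s_0,\kappa_0)$ a Morse critical point of $M^{m:l}$ with critical value zero. Applying the implicit function theorem to $\nabla_{(s,\kappa)}\tilde{F}=0$---solvable because the Hessian is invertible---yields a nearby Morse critical point $(s_*(\varepsilon),\kappa_*(\varepsilon))=(s_0,\kappa_0)+O(\varepsilon)$ of the same Morse index, with critical value $c(\varepsilon)=\varepsilon R(s_0,\kappa_0,0)+O(\varepsilon^2)$. A parametric Morse lemma then reduces $\tilde{F}-c(\varepsilon)$ to the local normal form $\pm u^2\pm v^2$, so that the zero set $\{\tilde{F}=0\}=\{\pm u^2\pm v^2=-c(\varepsilon)\}$ can be read off directly. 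When $\det D^2M^{m:l}(s_0,\kappa_0)>0$ the quadratic form is definite and the zero set is either empty or a smooth closed curve encircling $(s_*(\varepsilon),\kappa_*(\varepsilon))$---the birth of an isola of perturbed periodic orbits. When $\det D^2M^{m:l}(s_0,\kappa_0)<0$ the form is indefinite and the zero set consists, according to the sign of $c(\varepsilon)$, of two transversally crossing branches or two non-crossing hyperbolic branches, i.e.\ the canonical unfolding of a simple bifurcation.

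The main technical obstacle is ensuring that the correction $R(s,\kappa,\varepsilon)$ inherits $C^{r-1}$-regularity jointly in $(s,\kappa,\varepsilon)$ with uniform estimates, so that both the successive implicit function theorem steps and the parametric version of the Morse lemma apply at the same order. This is delicate when $\kappa$ coincides with the family parameter $\lambda$, because then $\kappa$ affects the unperturbed orbit $x_0$ and its monodromy simultaneously and the smoothness of the Lyapunov--Schmidt splitting must be tracked through that dependence. A secondary subtlety is that $s$ is a phase modulo $m\tau$, so one must verify that the local closed curve or pair of branches produced by the Morse classification descends to genuinely distinct perturbed periodic orbits rather than to phase translates of a single one; this follows from the translation invariance built into $L$ in the parametrisation~(\ref{eq:orbeclose}).
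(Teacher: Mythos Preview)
Your proposal is correct and follows essentially the same route as the paper: both reduce to the scalar bifurcation function $B^{m:l}_L(s,\kappa,\varepsilon)=M^{m:l}(s,\kappa)+O(\varepsilon)$ from Theorem~\ref{thm:redthm}, then handle the saddle-node case via the implicit function theorem applied to the pair $(B,D_sB)=0$ (the paper does this in one step with the $2\times 2$ Jacobian $-D^2_{ss}M\cdot D_\kappa M\neq 0$, which is equivalent to your two-step version), and classify the degenerate cases by standard singularity theory. The only difference is cosmetic: where you spell out the parametric Morse lemma to read off the isola/simple-bifurcation dichotomy from the sign of $\det D^2M^{m:l}$, the paper simply invokes Theorem~2.1 and Table~2.3 of Chapter~IV in Golubitsky--Schaeffer~\cite{GolubitskySchaeffer1985}, which encodes exactly that argument.
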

We prove Theorem \ref{thm:ftpert} in Appendix \ref{app:A1}. Note that the bifurcations described in the last sentence of Theorem \ref{thm:ftpert} are singular ones. Under these, the local, qualitative behaviour of the solutions of Eq. (\ref{eq:DisplMapC}) may change for different small $\varepsilon>0$ as we describe below in an example. On the other hand, periodic orbits arising from either simple zeros or quadratic and $\kappa$-nondegenerate zeros persist for small $\varepsilon>0$. We refer the reader to \cite{ChowHale1982,GolubitskySchaeffer1985,Govaerts2000} for detailed analyses of such singular bifurcations.
\begin{figure}[t]
\centering
\includegraphics[width=1\textwidth]{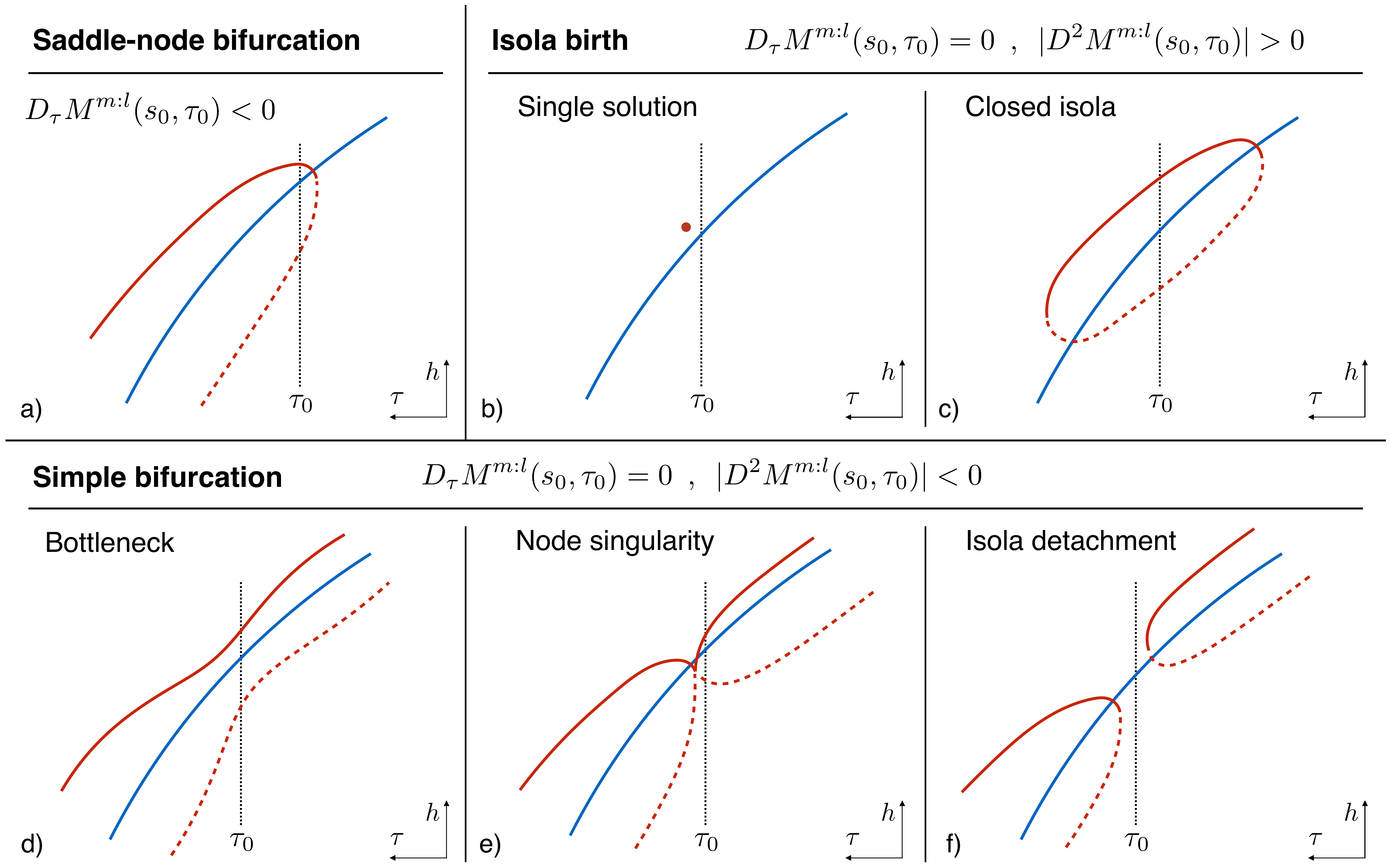}
\caption{Illustration of the bifurcation phenomena described in Theorem \ref{thm:stpert} along a $\tau$-parametrised conservative backbone curve close to a quadratic zero of the Melnikov function. Blue lines identify conservative backbone curves while red lines mark perturbed periodic orbits. Solid and dashed lines identify different local branches of solutions.}
\label{fig:S3_IM3}
\end{figure}

Figure \ref{fig:S3_IM3} illustrates the bifurcations described in Theorem \ref{thm:ftpert} when the family $\mathcal{P}$ can be locally parametrised with the period, which is also the selected continuation parameter, $\kappa=\tau$. This means sweeping along orbits of the family, indicated with a blue line, and analysing when these orbits give rise to perturbed ones in the frequency response, denoted in red.

Plot (a) shows a saddle-node bifurcation in $\tau$, also known as \textit{limit} or \textit{fold} bifurcation. For this type of quadratic zero, the conservative orbit at $\tau_0$ and the period $\tau_0$ itself are $O(\varepsilon)$-close to a locally unique saddle-node periodic orbit in $\tau$ of the frequency response. This unique orbit originates as two solutions branches of Eq. (\ref{eq:DisplMapC}) join together. After this point, conservative orbits of $\mathcal{P}$ do not smoothly persist, at least locally.

The singular case of isola birth, illustrated in Fig. \ref{fig:S3_IM3}(b) and Fig. \ref{fig:S3_IM3}(c), has three possible bifurcation outcomes, depending on the value of the parameters. It is typically observed that either no solution persists from the ones in $\mathcal{P}$ (not show in Fig. \ref{fig:S3_IM3}) or a closed branch of solutions appears, i.e., an \textit{isola} as shown in Fig. \ref{fig:S3_IM3}(c). Instead, the \textit{single solution} case of Fig. \ref{fig:S3_IM3}(b) may occur, but it is non-generic.

Similarly, simple bifurcations may manifest themselves in three scenarios. The \textit{bottleneck}, in Fig. \ref{fig:S3_IM3}(d), and the \textit{isola detachment}, in (f), are generic, while the \textit{node singularity} of Fig. \ref{fig:S3_IM3}(e) is an extreme case.
\begin{remark}
\label{rmk:generalform}
The results we have presented in Theorems \ref{thm:stpert}-\ref{thm:ftpert} apply to general, non-autonomous perturbations of conservative systems with a normal family of periodic orbits, not just to mechanical systems. Moreover, the perturbation may be also of type $g(x,\dot{x},t;T,\varepsilon)$.
\end{remark}

\subsection{The Melnikov function for mechanical systems}
\label{sec:S3_3}
The underlying physics of the full system in Eq. (\ref{eq:MechSys}) implies that any periodic solution with minimal period $lT$ must necessarily experience zero energy balance in one oscillation cycle. Defining the energy function
\begin{equation}
\label{eq:EDef}
E_b(\xi,\varepsilon)_{[0,lT]}=\varepsilon \int_0^{lT} \big\langle\, \dot{q}(t;\xi,T,\varepsilon)\, ,
\, Q(q(t;\xi,T,\varepsilon),\dot{q}(t;\xi,T,\varepsilon),t;T, \varepsilon)\big\rangle dt ,
\end{equation} 
we deduce that, along such a periodic orbit, we must have
\begin{equation}
\label{eq:EBal}
E_b(\xi,\varepsilon)_{[0,lT]}=0 ,
\end{equation} 
given that the work done by the non-conservative forces must vanish over one cycle of that orbit. Equation (\ref{eq:EBal}) is commonly called the \textit{energy principle} in literature \cite{Hill2015,Hill2016,Peter2018}.

By imposing $\xi=(q_0(s;p),\dot{q}_0(s;p))+O(\varepsilon)$ and $lT=m\tau+O(\varepsilon)$ in the energy balance equation, one can easily verify that the Melnikov function of Eq. (\ref{eq:MelFun}) is the leading-order term of the Taylor expansion of Eq. (\ref{eq:EBal}), i.e.,
\begin{equation}
E_b(\xi,\varepsilon)_{[0,lT]}=\varepsilon M^{m:l}(s)+O(\varepsilon^2) ,
\end{equation}
\begin{equation*}
M^{m:l}(s)=\int_0^{m\tau}\big\langle\, \dot{q}_0(t+s;p)\, , 
\, Q(q_0(t+s;p),\dot{q}_0(t+s;p),t;\tau m/l,0)\big\rangle dt ,
\end{equation*}
where we have used Eq. (\ref{eq:FGDef}) and the relation
\begin{equation}
DH(x)=DH(q,\dot{q})=\begin{pmatrix}\, DV(q)+D_q E(q,\dot{q}),& M(q)\dot{q} \,\end{pmatrix} .
\end{equation}
Before exploring the implications of this peculiar form of the Melnikov function in specific cases, it is useful to recall the definitions of \textit{subharmonic} and \textit{superharmonic} resonances \cite{NayfehM2007} in terms of $l$ and $m$. These integers define the relation between the minimal period of the orbit and that of the perturbation. A subharmonic resonance occurs when the forcing frequency is a multiple of the orbit frequency, i.e., $l\neq 1$ and $m=1$. The converse holds for a superharmonic resonance, for which we have $l=1$ and $m\neq1$. The attribute \textit{ultrasubharmonic} \cite{GH1983} indicates higher-order resonances, when both $m$ and $l$ are different from 1. 

\section{Monoharmonic forcing with arbitrary dissipation}
\label{sec:S4}
Due to their importance in the structural vibrations context, we now consider perturbations $Q$ in Eq. (\ref{eq:MechSys}) whose leading-order term in $\varepsilon$ is of the form
\begin{equation}
\label{eq:QPFAD}
\begin{array}{lcr}
Q(q,\dot{q},t;T,e,0)=e f_e \cos(\Omega t)-C(q,\dot{q}), & \,\,\,\,&  \Omega=2\pi/T ,
\end{array}
\end{equation}
where $e\in\mathbb{R}$ is a forcing amplitude parameter, $f_e \in\mathbb{R}^N$ is a constant vector of unit norm and $C(q,\dot{q})$ is a smooth, dissipative vector field. The actual forcing amplitude and the dissipative vector field are both rescaled by the value of the perturbation parameter $\varepsilon$. 

First, we discuss the possible bifurcations that single orbits can experience in such systems when perturbed into forced-damped frequency responses, then we discuss the fate of periodic orbit families. Finally, we also illustrate the implications of the Melnikov method for the phase-lag quadrature criterion used in experimental vibration analysis.

\subsection{Bifurcations from single orbits}
\label{sec:S4_1}

We consider bifurcations from the conservative orbit $\mathcal{Z}$ with $p\in\mathcal{Z}$ and seek to perform continuation with the parameter $e$. In this case, the Melnikov function takes the form
\begin{equation}
\label{eq:MPFAD}
\begin{array}{rl}
M^{m:l}(s,e)=&\displaystyle  e\int_0^{m\tau}\langle \dot{q}_0(t+s;p) ,f_e\rangle \cos\left( \frac{2l\pi}{m\tau} t\right)dt+ \\ \\ &\displaystyle-\int_0^{m\tau}\langle \dot{q}_0(t+s;p) ,C(q_0(t+s;p),\dot{q}_0(t+s;p)) \rangle dt\\ \\
=&\displaystyle w^{m:l}(s,e)-m R,
\end{array}
\end{equation}
where we have introduced the \textit{resistance}
\begin{equation}
\label{eq:resdef}
R=\int_0^{\tau}\langle \dot{q}_0(t;p) ,C(q_0(t;p),\dot{q}_0(t;p)) \rangle dt,
\end{equation}
measuring the dissipated energy along one period $\tau$ of $\mathcal{Z}$. This function is independent of $s$ since $C$ does not explicitly depend on time and the factor $m$ arises in (\ref{eq:MPFAD}) because (\ref{eq:resdef}) is $\tau$-periodic. In contrast,
\begin{equation}
\label{eq:workdef}
w^{m:l}(s,e)=e\int_0^{m\tau}\langle \dot{q}_0(t+s;p) ,f_e\rangle \cos\left( \frac{2l\pi}{m\tau} t\right)dt
\end{equation}
is the work done by the force along $m$ periods of the conservative solution.

To simplify Eq. (\ref{eq:MPFAD}) further, we express the conservative periodic solution $\mathcal{Z}$ using the Fourier series
\begin{equation}
\label{eq:Fseries}
\begin{array}{lcr}
\displaystyle q_0(t;p)=\frac{a_0}{2}+\sum_{k=1}^\infty a_k \cos\left( k\omega t\right)+b_k \sin\left( k\omega t\right) , &\,\,\,\, & \displaystyle  \omega=\frac{2\pi}{\tau} ,
\end{array}
\end{equation}
where $a_k,\,b_k \in\mathbb{R}^N$ are the Fourier coefficients of the displacement coordinates. By inserting this expansion in Eq. (\ref{eq:MPFAD}), we obtain for $w^{m:l}(s,e)$ the expression
\begin{equation}
\label{eq:WPFAD}
w^{m:l}(s,e)=\begin{cases} 0 & \mathrm{if\,\,} m\neq 1 \\
\displaystyle  W^{1:l}(e)\cos\left( l\omega s -\alpha_{l,e}\right) & \mathrm{if\,\,} m= 1
\end{cases},
\end{equation}
where
\begin{equation}
\begin{array}{lcccr}
W^{1:l}(e)=eA_{l,e} , &\,\,\,\, & A_{l,e}=l\pi\sqrt{\displaystyle\left\langle a_l, f_e\right\rangle^2+\left\langle b_l ,f_e\right\rangle^2}, &\,\,\,\, & \displaystyle \alpha_{l,e}=\arctan \left(\frac{\langle a_l ,f_e\rangle}{\langle b_l ,f_e\rangle}\right).
\end{array}
\end{equation}
We provide the details of these derivations in Appendix \ref{app:A2}. The quantity $W^{1:l}(e)$ measures the maximum work done by the forcing along one cycle of the conservative periodic orbit. This work depends linearly on the forcing amplitude parameter $e$. Equation (\ref{eq:WPFAD}) implies that superharmonics or ultrasubharmonics cannot occur for the considered perturbation, which is consistent with literature observations. As a consequence, we have the following proposition characterising primary and subharmonic resonances, where the relation between the forcing frequency $\Omega$ and the conservative orbit frequency $\omega$ reads $\Omega = l\omega +O(\varepsilon)$.
\begin{proposition}
\label{prop:MPFADclass}
The Melnikov function for the perturbation in Eq. (\ref{eq:QPFAD}) takes the specific form
\begin{equation}
\label{eq:MPFADp}
M^{1:l}(s,e)=W^{1:l}(e)\cos\left( l\omega s -\alpha_{l,e}\right)-R .
\end{equation}
Assuming $M^{1:l}(s,e_0) \not\equiv 0$ for some $e_0\neq 0$, the following bifurcations of the conservative periodic orbit $\mathcal{Z}$ are possible for small $\varepsilon >0$:
\begin{enumerate}[label=\textit{(\roman*)}]
\item if $|W^{1:l}(e_0)|<|R|$, the conservative solution $\mathcal{Z}$ does not smoothly persist;
\item if $|W^{1:l}(e_0)|>|R|$, two periodic orbits bifurcate from $\mathcal{Z}$;
\item if $|W^{1:l}(e_0)|=|R|>0$, there exist a forcing amplitude parameter $e_{sn}=e_0+O(\varepsilon)$ for which a unique periodic orbit emanates from $\mathcal{Z}$.
\end{enumerate}
\end{proposition}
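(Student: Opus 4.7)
The plan is to specialize Theorems \ref{thm:stpert} and \ref{thm:ftpert} to the explicit Melnikov function (\ref{eq:MPFADp}) and sort the three cases by comparing $|W^{1:l}(e_0)|$ with $|R|$. The analysis is elementary because, thanks to the reduction in (\ref{eq:WPFAD}), $M^{1:l}(\cdot,e_0)$ is a shifted, amplitude-scaled cosine minus the constant $R$, so all of its zeros and their orders can be read off directly from the structure of $\cos(l\omega s - \alpha_{l,e})=R/W^{1:l}(e_0)$.

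For case \textit{(i)}, I would use $|\cos|\le 1$ to obtain
\begin{equation*}
|M^{1:l}(s,e_0)|\ge|R|-|W^{1:l}(e_0)|>0
\end{equation*}
uniformly in $s$, so the Melnikov function is bounded away from zero and the nonpersistence half of Theorem \ref{thm:stpert} applies. For case \textit{(ii)}, the equation $\cos(l\omega s-\alpha_{l,e})=R/W^{1:l}(e_0)$ is solvable because $|R/W^{1:l}(e_0)|<1$, and at any such root $s_0$ one has $\sin^2(l\omega s_0-\alpha_{l,e})=1-(R/W^{1:l}(e_0))^2>0$. Hence
\begin{equation*}
D_sM^{1:l}(s_0,e_0)=-W^{1:l}(e_0)\,l\omega\,\sin(l\omega s_0-\alpha_{l,e})\neq 0,
\end{equation*}
the zeros are simple, and the first half of Theorem \ref{thm:stpert} yields the two bifurcating periodic orbits.

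Case \textit{(iii)} is the one where I expect the real care to be needed. At $|W^{1:l}(e_0)|=|R|$ the cosine takes the extremal value $\pm 1$ while the sine vanishes, giving simultaneously $M^{1:l}(s_0,e_0)=0$ and $D_sM^{1:l}(s_0,e_0)=0$, together with $D_{ss}^2M^{1:l}(s_0,e_0)=-R(l\omega)^2\neq 0$; this is a quadratic zero in the sense of (\ref{eq:qtanzero}). To invoke Theorem \ref{thm:ftpert} with continuation parameter $\kappa=e$, I still need the $e$-transversality $D_eM^{1:l}(s_0,e_0)\neq 0$. Since $W^{1:l}(e)=eA_{l,e}$ is linear in $e$ while $\alpha_{l,e}$ depends only on the fixed spatial pattern $f_e$, differentiating gives
\begin{equation*}
D_eM^{1:l}(s_0,e_0)=A_{l,e}\cos(l\omega s_0-\alpha_{l,e})=\pm A_{l,e},
\end{equation*}
which is nonzero because $|R|=|e_0A_{l,e}|>0$ forces $A_{l,e}\neq 0$. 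Theorem \ref{thm:ftpert} then delivers the saddle-node bifurcation at $e_{sn}=e_0+O(\varepsilon)$.

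The main potential pitfall is the notational overload of the letter $e$: it is at once the scalar amplitude parameter and a subscript on $f_e$, $A_{l,e}$, $\alpha_{l,e}$ identifying the fixed forcing direction. Keeping these two usages separate is what makes the $e$-transversality in case \textit{(iii)} collapse to the purely geometric nondegeneracy $A_{l,e}\neq 0$, which in turn follows for free from $R\neq 0$ combined with $|W^{1:l}(e_0)|=|R|$.
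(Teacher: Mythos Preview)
Your approach matches the paper's essentially line for line: both proofs reduce to the trichotomy $|W^{1:l}(e_0)|\lessgtr|R|$ and then invoke Theorem~\ref{thm:stpert} for cases \textit{(i)}--\textit{(ii)} and Theorem~\ref{thm:ftpert} with $\kappa=e$ for case \textit{(iii)}, using the linearity $W^{1:l}(e)=eA_{l,e}$ to get the $e$-transversality $D_eM^{1:l}(s_{qz},e_0)=\pm A_{l,e}\neq 0$.

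There is one point you gloss over that the paper makes explicit: the \emph{count} of distinct perturbed orbits. In case \textit{(ii)} the equation $\cos(l\omega s-\alpha_{l,e})=R/W^{1:l}(e_0)$ has $2l$ simple roots in $[0,\tau)$, not two; in case \textit{(iii)} there are $l$ quadratic zeros, not one. Theorem~\ref{thm:stpert} applied at each simple zero would naively produce $2l$ perturbed orbits. The paper closes this by observing that the forcing has period $T=\tau/l$, so the forcing signal passes through zero phase $l$ times on $[0,\tau)$; zeros of $M^{1:l}$ that differ by a shift of $\tau/l$ in $s$ correspond to the same perturbed periodic orbit seen at a different forcing phase. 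Hence the $2l$ simple zeros collapse to exactly two distinct orbits in \textit{(ii)}, and the $l$ quadratic zeros to one saddle-node orbit in \textit{(iii)}. Your sentence ``Theorem~\ref{thm:stpert} yields the two bifurcating periodic orbits'' presupposes this identification without stating it; add the $l$-fold redundancy argument and the proof is complete.
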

\begin{proof}
Since $M^{1:l}(s,e_0)$ remains bounded away from zero for $|W^{1:l}(e_0)|<|R|$, statement \textit{(i)} follows from Theorem \ref{thm:stpert}. When $|W^{1:l}(e_0)|>|R|$, $M^{1:l}(s,e_0)$ features $2l$ simple zeros for $s\in[0,\tau)$ for which Theorem \ref{thm:stpert} applies again. Considering that the forcing signal passes $l$ times the zero phase in $[0,\tau)$, $\,l$ of these zeros correspond to a single perturbed orbit so that two periodic solutions bifurcate from $\mathcal{Z}$, proving statement \textit{(ii)}. Finally, we will argue that statement \textit{(iii)} is a direct consequence of Theorem \ref{thm:ftpert}. First, note that the Melnikov function (\ref{eq:MPFADp}) features $l$ quadratic zeros in $s$ as defined in Eq. (\ref{eq:qtanzero}), corresponding to the $l$ maxima or minima of $\cos\left( l\omega s -\alpha_{l,e}\right)$ for $s\in[0,\tau)$, depending on the signs of $W^{1:l}(e)$ and $R$. Considering a location $s_{qz}$ among these quadratic zeros, we obtain
\begin{equation}
\label{eq:DeMPFAD}
|D_e M^{1:l}(s_{qz},e_0)|=|D_e W^{1:l}(e_0)|=A_{l,e}> 0
\end{equation}
by the assumption $|W^{1:l}(e_0)|=|R|>0$. Thus, these quadratic zeros are nondegenerate in $\kappa$ and, since $l$ of them correspond again to a single orbit, we conclude that a saddle-node bifurcation occurs from Theorem \ref{thm:ftpert}. More precisely, there exists a value $e_{sn}=e_0+O(\varepsilon)$ for which a periodic orbit $O(\varepsilon)$-close to $\mathcal{Z}$ corresponds to a fold point for continuations in $e$. \\ 
Due to the specific form of the function in Eq. (\ref{eq:MPFADp}), no further degeneracies in $s$ are possible (e.g. cubic zeros) for $M^{1:l}(s,e)$ so that the cases \textit{(i-iii)} are the only possible bifurcations. 
\end{proof}
From Proposition \ref{prop:MPFADclass}, we can derive necessary conditions for the persistence of a periodic orbit under forcing and damping. Either for case \textit{(ii)} and \textit{(iii)}, $W^{1:l}(e)$ must be nonzero, i.e., $e\neq0$ and $A_{l,e}>0$. The latter quantity is zero if the $l$-th harmonic is not present in Eq. (\ref{eq:Fseries}) or if $f_e$ is orthogonal to both its Fourier vectors. In the non-generic case of $M^{1:l}(s,e_0)\equiv 0$, the Melnikov function does not give any information on the persistence problem.

\subsection{Bifurcations from normal families}
\label{sec:S4_2}
We now investigate possible bifurcations that a conservative, 1-normal family $\mathcal{P}$ of periodic orbits may exhibit when perturbed with Eq. (\ref{eq:QPFAD}) into frequency responses at fixed $e$. Specifically, we study phenomena that occur with respect to the forcing frequency $\Omega$ and an amplitude measure $a$ of interest.  

We assume that either $\omega$ or $a$ can be locally used as the family parameter $\lambda$ for $\mathcal{P}$ and we denote $\mathcal{B}$ the conservative backbone curve in the plane $(l\omega,a)$. We then introduce the following definition.
\begin{definition}
\label{def:Ridge}
A \textit{ridge} $\,\mathcal{R}_l$ is the curve in the plane $(e,\lambda)$ identifying the forcing amplitudes and the orbits of $\mathcal{P}$ at which quadratic zeros of $M^{1:l}$ in $s$ occur.
\end{definition}
The significance of ridges for frequency responses is clarified by the following proposition.
\begin{proposition}
\label{prop:ridgeprop}
Assume that $eR(\lambda_0)>0$ and $A_{l,e}(\lambda_0)>0$ hold for the periodic orbit $\mathcal{Z}$ identified by $\lambda_0$. Then, the explicit local definition of $\,\mathcal{R}_l$ becomes $e=\Gamma_l(\lambda)$, where
\begin{equation}
\Gamma_l(\lambda)=R(\lambda)/A_{l,e}(\lambda) . 
\end{equation}
If $D\Gamma_l(\lambda_0)> 0$ (resp. $<0$), then the forced-damped response for $e_0=\Gamma_l(\lambda_0)$ shows a maximal (resp. minimal) response with respect to $\lambda$ $O(\varepsilon)$-close to $\mathcal{B}$ at $\mathcal{Z}$. If $D\Gamma_l(\lambda_0)=0$ and $D^2\Gamma_l(\lambda_0)>0$ (resp. $<0$), then the forced-damped response for $e_0=\Gamma_l(\lambda_0)$ includes an isola birth (resp. simple bifurcation) in $\lambda$ which is $O(\varepsilon)$-close to $\mathcal{B}$ at $\mathcal{Z}$.
\end{proposition}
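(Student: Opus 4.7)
The plan is to reduce Proposition \ref{prop:ridgeprop} to successive applications of Theorem \ref{thm:ftpert} with continuation parameter $\kappa = \lambda$, by reading the hypotheses of that theorem from derivatives of the ridge map $\Gamma_l(\lambda) = R(\lambda)/A_{l,e}(\lambda)$ evaluated on the explicit form $M^{1:l}(s,\lambda,e) = eA_{l,e}(\lambda)\cos(l\omega(\lambda)s - \alpha_{l,e}(\lambda)) - R(\lambda)$ from Proposition \ref{prop:MPFADclass}. The assumption $A_{l,e}(\lambda_0)>0$ makes $\Gamma_l$ smooth near $\lambda_0$. To locate the ridge explicitly, note that a quadratic zero of $M^{1:l}$ in $s$ requires $D_s M^{1:l}=0$ and $M^{1:l}=0$ simultaneously, i.e., $\sin(l\omega s_0 - \alpha_{l,e})=0$ together with $eA_{l,e}(\lambda_0)\cos(\cdot) = R(\lambda_0)$. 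The first condition forces $\cos(\cdot)=\pm 1$, and under $eR(\lambda_0)>0$ only the branch $\cos(\cdot)=+1$ is compatible, giving $e=\Gamma_l(\lambda_0)$; nondegeneracy is automatic since $D^2_{ss}M^{1:l}|_{s_0}=-eA_{l,e}l^2\omega^2\neq 0$. This already establishes the announced local parametrisation $e=\Gamma_l(\lambda)$ of $\mathcal{R}_l$.

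Fixing $e=e_0:=\Gamma_l(\lambda_0)$ and differentiating $M^{1:l}$ in $\lambda$ at the quadratic zero, the $\sin(\cdot)$ terms vanish and I obtain $D_\lambda M^{1:l}(s_0,\lambda_0) = e_0 D_\lambda A_{l,e}(\lambda_0) - D_\lambda R(\lambda_0)$. Differentiating the quotient $\Gamma_l = R/A_{l,e}$ and using $e_0 A_{l,e}(\lambda_0) = R(\lambda_0)$ yields the key identity $D_\lambda M^{1:l}(s_0,\lambda_0) = -A_{l,e}(\lambda_0)\,D\Gamma_l(\lambda_0)$. Hence $D\Gamma_l(\lambda_0)\neq 0$ is precisely the $\kappa$-nondegeneracy condition of Theorem \ref{thm:ftpert}, delivering a saddle-node of periodic orbits in $\lambda$ at $\lambda_0 + O(\varepsilon)$. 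To convert this into the claimed maximum or minimum of the response, I would combine it with Proposition \ref{prop:MPFADclass}(ii): two perturbed orbits exist only where $|\Gamma_l(\lambda)|<|\Gamma_l(\lambda_0)|$, so the sign of $D\Gamma_l(\lambda_0)$ dictates which side of $\lambda_0$ the two-orbit region lies on and thereby whether $\lambda_0$ is an upper or lower endpoint of the response.

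For the degenerate subcase $D\Gamma_l(\lambda_0)=0$, I would invoke the $\det(D^2 M^{1:l})$ alternative in Theorem \ref{thm:ftpert}. Introducing $\beta := l D_\lambda\omega(\lambda_0)\,s_0 - D_\lambda\alpha_{l,e}(\lambda_0)$, a direct evaluation at $s_0$ gives $D^2_{ss}M^{1:l}=-e_0 A_{l,e} l^2\omega^2$, $\,D^2_{s\lambda}M^{1:l}= -e_0 A_{l,e} l\omega\,\beta$, and $D^2_{\lambda\lambda}M^{1:l}= e_0 D^2_\lambda A_{l,e} - e_0 A_{l,e}\beta^2 - D^2_\lambda R$. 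The $\beta$-dependent contributions cancel in the $2\times 2$ minor $D^2_{ss}M^{1:l}\cdot D^2_{\lambda\lambda}M^{1:l} - (D^2_{s\lambda}M^{1:l})^2$, leaving $\det(D^2 M^{1:l}) = e_0 A_{l,e} l^2\omega^2\bigl(D^2_\lambda R - e_0 D^2_\lambda A_{l,e}\bigr)$. A second differentiation of $\Gamma_l$ under $D\Gamma_l(\lambda_0)=0$ then gives $D^2_\lambda R - e_0 D^2_\lambda A_{l,e} = A_{l,e}(\lambda_0)\,D^2\Gamma_l(\lambda_0)$, so $\det(D^2 M^{1:l}) = e_0 A_{l,e}(\lambda_0)^2 l^2\omega^2\,D^2\Gamma_l(\lambda_0)$ has the same sign as $D^2\Gamma_l(\lambda_0)$. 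Theorem \ref{thm:ftpert} then classifies the bifurcation as an isola birth when $D^2\Gamma_l(\lambda_0)>0$ and as a simple bifurcation when $D^2\Gamma_l(\lambda_0)<0$. The main technical hurdle is the disciplined bookkeeping of the phase-derivative $\beta$ through the three second partials and verifying the clean cancellation in the determinant; once that goes through, the remaining steps are purely algebraic.
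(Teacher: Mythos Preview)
Your proposal is correct and follows essentially the same route as the paper: both rewrite $M^{1:l}$ so that the ridge condition reads $e=\Gamma_l(\lambda)$, compute $D_\lambda M^{1:l}|_{s_{qz}}=-A_{l,e}(\lambda_0)D\Gamma_l(\lambda_0)$ to invoke the saddle-node case of Theorem~\ref{thm:ftpert}, use Proposition~\ref{prop:MPFADclass}(ii) on either side of $\lambda_0$ to decide maximum versus minimum, and in the degenerate case reduce $\det(D^2M^{1:l})$ to a positive multiple of $e_0\,D^2\Gamma_l(\lambda_0)$. Your version is in fact more explicit than the paper's in tracking the phase-derivative $\beta$ through the Hessian and exhibiting its cancellation, and your final determinant $e_0\,A_{l,e}^2\,l^2\omega^2\,D^2\Gamma_l$ carries the factor $l^2$ that the paper's stated formula omits; this does not affect the sign conclusion.
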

\begin{proof}
We rewrite the Melnikov function as
\begin{equation}
M^{1:l}(s,e,\lambda)=A_{l,e}(\lambda)\Big(e\cos\big(l\omega(\lambda) s -\alpha_{l,e}(\lambda)\big)-\Gamma_l(\lambda)\Big) ,
\end{equation}
which features $l$ quadratic zeros in $s$ for $e=\Gamma_l(\lambda)$. When $D\Gamma_l(\lambda_0)\neq 0$, Theorem \ref{thm:ftpert} identifies a saddle-node bifurcation because
\begin{equation}
\label{eq:dml1}
D_\lambda M^{1:l}\big(s_{qz}(\lambda_0),\Gamma_l(\lambda_0),\lambda_0\big)=-A_{l,e}(\lambda_0)D\Gamma_l(\lambda_0)\neq 0 ,
\end{equation}
at any of the $l$ locations $s_{qz}(\lambda_0)$ of quadratic zeros of $M^{1:l}$ in $s$. As already discussed in Proposition \ref{prop:MPFADclass}, there exists a unique periodic orbit $O(\varepsilon)$-close to $\mathcal{Z}$, corresponding to a fold for continuations in $\lambda$. If $D\Gamma_l(\lambda_0)>0$, we can choose a small positive $\epsilon$ defining a $\lambda_1=\lambda_0-\epsilon$ for which
\begin{equation}
W^{1:l}(e_0,\lambda_1)=e_0A_{l,e}(\lambda_1)=\Gamma_l(\lambda_0)A_{l,e}(\lambda_1)>\Gamma_l(\lambda_1)A_{l,e}(\lambda_1)=R(\lambda_1) ,
\end{equation}
so that, according to Proposition \ref{prop:MPFADclass}, two periodic orbits bifurcate at $e_0=\Gamma_l(\lambda_0)$ from the orbit of $\mathcal{P}$ described by $\lambda_1$. For $\lambda_2=\lambda_0+\epsilon$, we can similarly conclude that no orbit persists smoothly. Thus, the periodic orbit at the fold in $\lambda$ represents a maximal response. An analogous reasoning holds for the minimal response case arising for $D\Gamma_l(\lambda_0)<0$.

The last statement of Proposition \ref{prop:ridgeprop} holds again, based on Theorem \ref{thm:ftpert}, since we have that $D_\lambda M^{1:l}\big(s_{qz}(\lambda_0),\Gamma_l(\lambda_0),\lambda_0\big)=0$ from Eq. (\ref{eq:dml1}) and
\begin{equation}
\mathrm{det}\big( D^2_{s,\lambda} M^{1:l}\big(s_{qz}(\lambda_0),\Gamma_l(\lambda_0),\lambda_0\big)\big)=e\big(A_{l,e}(\lambda_0)\omega(\lambda_0)\big)^2D^2\Gamma_l(\lambda_0)\neq 0 .
\end{equation}
\end{proof} 
Ridges, as introduced in Definition \ref{def:Ridge}, are effective tools for the analysis of forced-damped responses in the vicinity of backbone curves as they can track fold bifurcations and generations of isolated responses. These phenomena are the most generic bifurcations for the perturbation type in Eq. (\ref{eq:QPFAD}). Ridge points may be used to detect further singular bifurcation behaviours under additional degeneracy conditions on $\lambda$ \cite{GolubitskySchaeffer1985}. 

\subsection{The phase-lag quadrature criterion}
\label{sec:S4_3}
We now discuss the relevance of the phase of the Melnikov function and the next proposition illustrates an important result in this regard.
\begin{proposition}
\label{prop:quadzerolag}
Consider a perturbed periodic orbit $q_{qz}(t;\xi,T,\varepsilon)$ corresponding to a quadratic zero of the Melnikov function (\ref{eq:MPFADp}) related to the conservative limit $\mathcal{Z}$. Then, the $l$-th harmonic of the function $\langle q_{qz}(t;\xi,T,\varepsilon) ,f_e\rangle$ has a phase lag (resp. lead) of $\,90^\circ+O(\varepsilon)$ with respect to the forcing signal if $eR>0$ (resp. $eR<0$).
\end{proposition}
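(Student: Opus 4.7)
The plan is to pin down the time shift $s_{qz}$ defining the persisting orbit from the closed form of $M^{1:l}(s,e)$ in Proposition~\ref{prop:MPFADclass}, then to read off the phase of the $l$-th Fourier harmonic of the leading-order persisting orbit $q_0(t+s_{qz};p)$ directly from the Fourier series (\ref{eq:Fseries}). The proposition then reduces to a trigonometric identity linking the angle $\alpha_{l,e}$ to the phase of a cosine signal at frequency $\Omega$.

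First I would note that, since $M^{1:l}(s,e)=W^{1:l}(e)\cos(l\omega s-\alpha_{l,e})-R$, a quadratic zero in $s$ requires $M^{1:l}(s_{qz},e)=0$ together with $D_sM^{1:l}(s_{qz},e)=0$. The derivative condition forces $\sin(l\omega s_{qz}-\alpha_{l,e})=0$, so $\cos(l\omega s_{qz}-\alpha_{l,e})=\pm1$, and the vanishing condition then yields $eA_{l,e}\cdot(\pm1)=R$. Because $A_{l,e}>0$, the sign is $+1$ precisely when $eR>0$ and $-1$ precisely when $eR<0$, so
\begin{equation*}
l\omega s_{qz}\equiv\alpha_{l,e}\pmod{2\pi}\text{ when }eR>0,\qquad l\omega s_{qz}\equiv\alpha_{l,e}+\pi\pmod{2\pi}\text{ when }eR<0.
\end{equation*}

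Next, Theorem~\ref{thm:stpert} gives $q_{qz}(t;\xi,T,\varepsilon)=q_0(t+s_{qz};p)+O(\varepsilon)$ with $lT=\tau+O(\varepsilon)$, so the forcing frequency $\Omega=2\pi/T$ agrees with the frequency $l\omega$ of the $l$-th Fourier harmonic of the conservative orbit up to $O(\varepsilon)$. Substituting (\ref{eq:Fseries}) and projecting onto $f_e$, the $l$-th harmonic of $\langle q_{qz}(\cdot;\xi,T,\varepsilon),f_e\rangle$ becomes, up to $O(\varepsilon)$,
\begin{equation*}
\langle a_l,f_e\rangle\cos\bigl(l\omega t+l\omega s_{qz}\bigr)+\langle b_l,f_e\rangle\sin\bigl(l\omega t+l\omega s_{qz}\bigr).
\end{equation*}
I would then expand the shifted sinusoids and match coefficients of $\cos(l\omega t)$ and $\sin(l\omega t)$, using the identities for $\sin\alpha_{l,e}$ and $\cos\alpha_{l,e}$ implicit in the passage between the integral definition (\ref{eq:workdef}) of $w^{1:l}$ and its compact form. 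At either of the two locations from the first step, the coefficient of $\cos(l\omega t)$ cancels, while the coefficient of $\sin(l\omega t)$ equals $\pm A_{l,e}/(l\pi)$ with the sign matching the sign of $eR$. The $l$-th harmonic is therefore, at leading order, $\pm\bigl(A_{l,e}/(l\pi)\bigr)\sin(l\omega t)=\bigl(A_{l,e}/(l\pi)\bigr)\cos\bigl(l\omega t\mp\pi/2\bigr)$, and comparison with the forcing $\cos(\Omega t)=\cos(l\omega t)+O(\varepsilon)$ yields the asserted $90^\circ$ phase lag if $eR>0$ and $90^\circ$ phase lead if $eR<0$.

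I expect the main obstacle to be the trigonometric bookkeeping in the last step. The angle $\alpha_{l,e}$ is defined through a ratio of $\langle a_l,f_e\rangle$ and $\langle b_l,f_e\rangle$ that is the reciprocal of the ratio naturally parametrising the phase of the projected $l$-th Fourier harmonic of $q_0(\cdot;p)$; these two arctangents are complementary, differing by exactly $\pi/2$. It is this $\pi/2$ gap that converts the time shift forced by the quadratic zero into the quadrature relation in the statement, and one must track signs carefully through the two sub-cases $\cos(l\omega s_{qz}-\alpha_{l,e})=\pm1$ in order to distinguish the lag from the lead outcome.
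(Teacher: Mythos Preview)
Your argument is correct and follows the same overall strategy as the paper: locate $s_{qz}$ from the quadratic-zero conditions on $M^{1:l}$, invoke $q_{qz}(t)=q_0(t+s_{qz};p)+O(\varepsilon)$, and then read off the phase of the $l$-th harmonic of $\langle q_0(\cdot+s_{qz};p),f_e\rangle$ relative to $\cos(\Omega t)$. The one methodological difference is that the paper first imposes, without loss of generality, the phase normalisation $\langle a_l,f_e\rangle>0$, $\langle b_l,f_e\rangle=0$ on $\mathcal{Z}$, which collapses $\alpha_{l,e}$ to a fixed value and makes the final harmonic computation a one-liner; you instead keep $\alpha_{l,e}$ general and do the full trigonometric expansion. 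Your route is slightly longer but avoids the WLOG step entirely, and the cancellation you describe (the $\cos(l\omega t)$ coefficient vanishing, the $\sin(l\omega t)$ coefficient equalling $\pm A_{l,e}/(l\pi)$) does go through once one uses the relations $\cos\alpha_{l,e}\propto\langle b_l,f_e\rangle$ and $\sin\alpha_{l,e}\propto-\langle a_l,f_e\rangle$ that underlie the passage from (\ref{eq:workdef}) to (\ref{eq:WPFAD}). The sign bookkeeping you flag as the main obstacle is real but manageable, and your identification of the two sub-cases with lag versus lead is correct.
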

\begin{proof} 
To determine the phase lag, we consider, without loss of generality, the phase condition for $\mathcal{Z}$
\begin{equation}
\label{eq:paraqpc}
\begin{array}{lcr} 
\langle a_l ,f_e\rangle>0, &\,\,\,\,&
\langle b_l ,f_e\rangle=0 ,
\end{array}
\end{equation}
under which the $l$-th term in the Fourier series of the function $\langle q_0(t;p) ,f_e\rangle$ is equal to $\langle a_l ,f_e\rangle \cos(l\omega t)$, having the same phase of the forcing. In that case, the Melnikov function becomes
\begin{equation}
\label{eq:melnphaselag}
M^{1:l}(s,e)=W^{1:l}(e)\cos\left( l\omega s +3\pi/2\right)-R=-W^{1:l}(e)\sin\left( l\omega s \right)-R .
\end{equation}
Eq. (\ref{eq:melnphaselag}) shows that the $l$ quadratic zeros of the Melnikov function occur for $|W^{1:l}(e)|=|R|$ and $l\omega s_{qz}=-\mathrm{sign}(eR)\pi/2+2k\pi$ with $k=0,1,...\,l-1$. Thus, we obtain
\begin{equation}
\langle q_{qz}(t;\xi,T,\varepsilon) ,f_e\rangle=\langle q_0(t+s_{qz};p) ,f_e\rangle+O(\varepsilon) ,
\end{equation}
whose $l$-th harmonic is equal to $\langle a_l ,f_e\rangle \cos(l\omega t -\mathrm{sign}(eR)\pi/2)+O(\varepsilon)$, independent of $k$.
\end{proof}
In numerical or experimental continuation, one can track the relation between the forcing amplitude parameter $e$ and either the amplitude $a$ or the forcing frequency $\Omega$ under the phase criterion of Proposition \ref{prop:quadzerolag}. The resulting curve of points is an $O(\varepsilon)$-approximation of the ridge curve $\,\mathcal{R}_l$ whose interpretation is available in Proposition \ref{prop:ridgeprop}.

Proposition \ref{prop:quadzerolag} relaxes some restrictions of the phase-lag quadrature criterion derived in \cite{Peeters2011a}. Indeed, Eq. (\ref{eq:Fseries}) allows for arbitrary periodic motion, not just synchronous ones along which all displacement coordinates reach their maxima at the same time. Moreover, our criterion is not limited to velocity-dependent, odd damping, but it admits arbitrary, smooth dissipations. For this general case, we proved that the phase-lag must be measured in \textit{co-location}, i.e., when the output (displacement response) is observed at the same location where the input (force) excites the system.

\section{Examples}
\label{sec:S5}
In this section, we study a conservative multi-degree of freedom system subject to non-conservative perturbations in the form of Eq. (\ref{eq:QPFAD}). First, we consider frequency responses with monoharmonic forcing and linear damping. Then, we introduce nonlinear damping to investigate the presence of isolas. In both cases, we show how the Melnikov analysis can predict forced-damped response bifurcations under a $1:1$ resonance between the forcing and periodic orbits of the conservative limit.

We analyse a system composed of six masses $m_i$ with $i=1,\,2,\, ... \, 6$ that are connected by seven nonlinear massless elements, as shown in Fig. \ref{fig:S5_0}. All masses are assumed unitary and the external forcing acts on the first degree of freedom only. The seven nonlinear elements exert a force depending on the elongation $\Delta l$ and its speed $\dot{\Delta l}$, modelled as
\begin{equation}
\label{eq:Feldamp}
\begin{array}{rl}
F_{i}(\Delta l,\dot{\Delta l},\varepsilon)=F_{i,el}(\Delta l)+\varepsilon F_{i,nc}(\dot{\Delta l})=& k_{i,1}\Delta l+k_{i,3}\Delta l^3+k_{i,5}\Delta l^5+\\ &+ \varepsilon (\alpha k_{i,1}\dot{\Delta l}+\beta k_{i,3}\dot{\Delta l}^3+\gamma k_{i,5}\dot{\Delta l}^5)
\end{array}
\end{equation}
\begin{figure}[t!]
\centering
\includegraphics[width=1\textwidth]{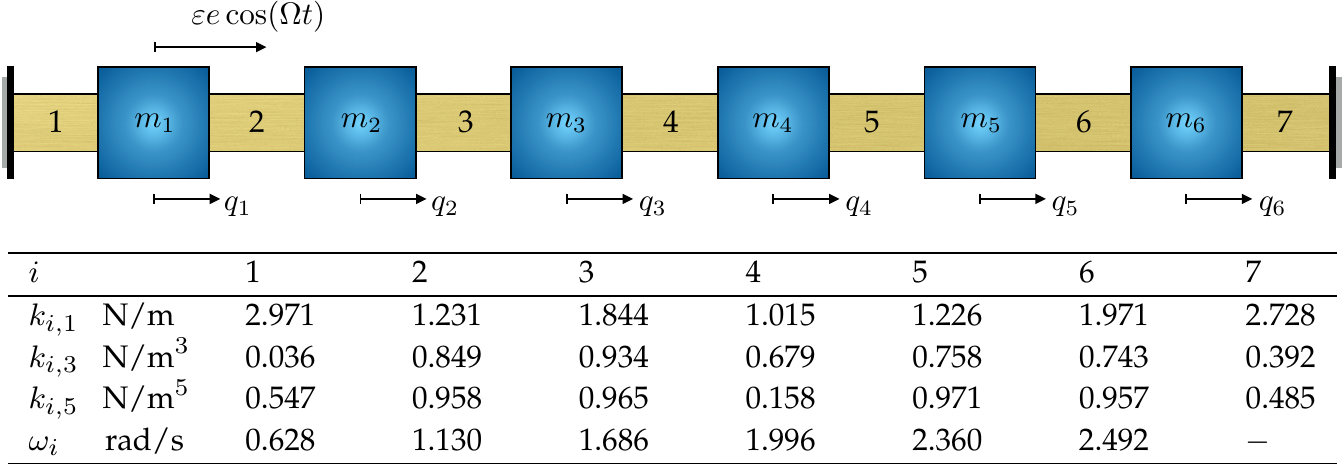}
\caption{Illustration of the mechanical system in (\ref{eq:mechsysex}) and table containing elastic coefficients $k_{i,j}$ of the constitutive law in (\ref{eq:Feldamp}) for the nonlinear elements and natural frequencies $\omega_i$ of the system linearised at the origin.}
\label{fig:S5_0}{}
\end{figure}
for $i=1,\,2,\, ... \, 7$. The coefficients $k_{i,1}$, $k_{i,3}$ and $k_{i,5}$ are reported in the table in Fig. \ref{fig:S5_0}, while the values of $\alpha$, $\beta$, $\gamma$ and $\varepsilon$ will vary from case by case below. The equations of motion read
\begin{equation}
\label{eq:mechsysex}
\begin{cases}
\ddot{q}_1+F_{1}(q_1,\dot{q}_1,\varepsilon)+F_{2}(q_1-q_2,\dot{q}_1-\dot{q}_2,\varepsilon)=\varepsilon e \cos(\Omega t) , \\ 
\,\,\,\,\,\,\,\,  \vdots\\
\ddot{q}_i+F_{i}(q_i-q_{i-1},\dot{q}_i-\dot{q}_{i-1},\varepsilon)+F_{i+1}(q_i-q_{i+1},\dot{q}_i-\dot{q}_{i+1},\varepsilon)=0 , & \mathrm{for}\,\,i=2,\,3,\, ... \, 5 , \\ 
\,\,\,\,\,\,\,\,\vdots\\
\ddot{q}_6+F_{6}(q_6-q_5,\dot{q}_6-\dot{q}_5,\varepsilon)+F_{7}(q_6,\dot{q}_6,\varepsilon)=0 .
\end{cases}
\end{equation}
To compute conservative periodic orbits and frequency responses for system (\ref{eq:mechsysex}), we use the MATLAB-based numerical continuation package \textsc{coco} \cite{Dankowicz2013}. We specifically exploit its periodic orbit toolbox that solves the continuation problem via collocation. In this method, solutions to the governing ordinary differential equations are approximated by piecewise polynomial functions and continuation is performed using a refined pseudo-arclength algorithm.

First, we focus on the study of the conservative limit ($\varepsilon=0$), in which the nonlinear elements are springs with convex potentials and the origin is an equilibrium whose eigenfrequencies $\omega_i$ are reported in the table of Fig. \ref{fig:S5_0}. Since no resonance arises among these frequencies, the system features six families of periodic orbits emanating from the origin by the Lyapunov subcenter manifold theorem \cite{Lyapunov1992}. Using numerical continuation starting from small-amplitude linearised periodic motions, we compute the conservative backbone curve for each mode, shown in Figure \ref{fig:S5_I}(a). We plot these curves using the normalised frequency $\bar{\omega}=\omega/\omega_1$ and the $L^2$ norm $||x||_{L^2,[0,T]}$ of the conservative periodic orbits. We consider the latter norm as the amplitude measure $a$. With the exception of the first periodic orbit family, the monodromy matrix of the periodic orbits has two Floquet multipliers equal to $+1$, whose geometric multiplicity is 1 in the selected frequency-amplitude range. Therefore, these five orbit families are $1$-normal, precisely belonging to case \textit{(a)} of Definition \ref{def:mNPO}, and showing a hardening trend ($Da$, $D\omega>0$). The first family also shows normality with hardening behaviour up to the magenta point, where branching phenomena takes place and a further family originates from the continuation of the first linearised mode. As $1$-normality does not hold in the vicinity of the branch point, depicted in magenta in Fig. \ref{fig:S5_I}(a), we restrict our analysis of the first family to amplitudes below the branch point amplitude.
\begin{figure}[t!]
\centering
\includegraphics[width=1\textwidth]{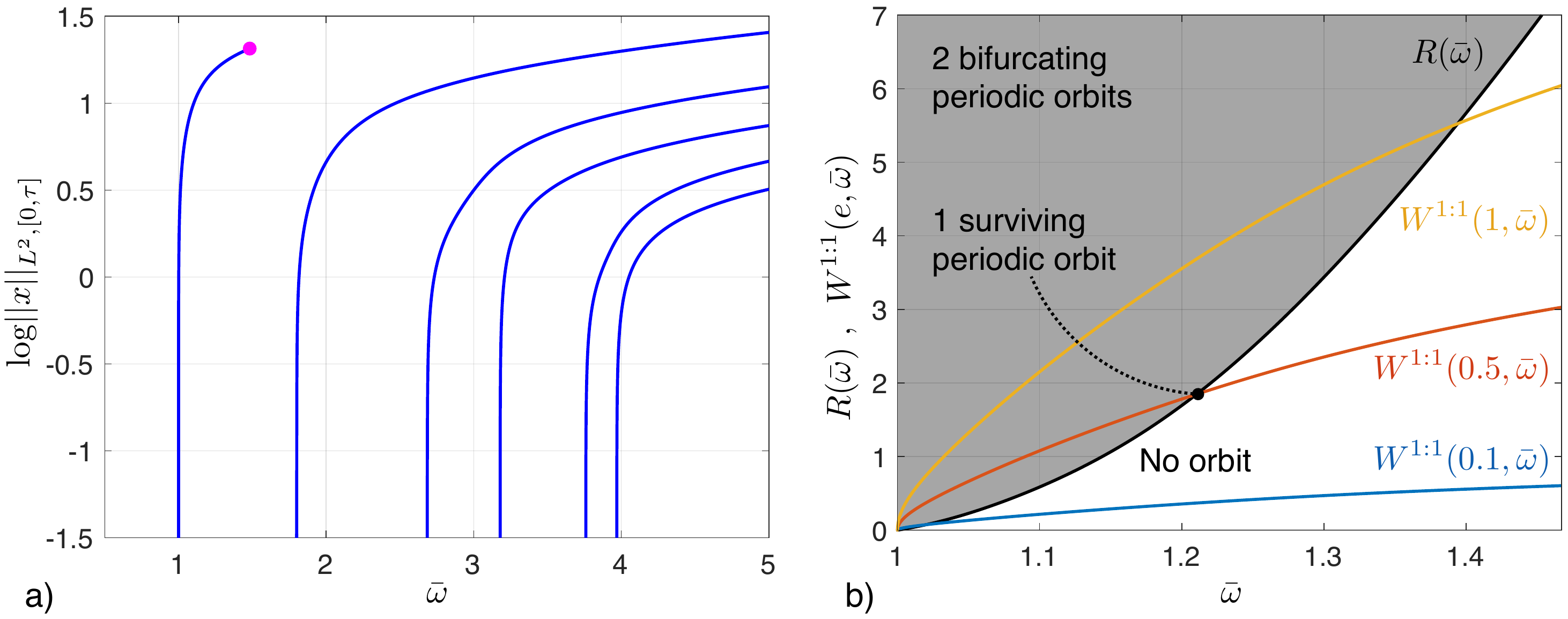}
\caption{(a) Conservative backbone curves of the unperturbed system and (b) Melnikov analysis for the first mode of the system with linear damping $\alpha=0.04$: the black solid line is the resistance $R(\bar{\omega})$; coloured lines show the amplitude of the active work $W^{1:1}_{a}(e,\bar{\omega})$ for different forcing amplitude values.}
\label{fig:S5_I}
\end{figure}

\subsection{Resonant external forcing with linear damping}
In this first example, we take the damping to be linear with $\alpha=0.04$ and $\beta=\gamma=0$ in Eq. (\ref{eq:Feldamp}). We focus on the orbits surviving from primary resonance conditions, where $m=l=1$, and we perform the analysis of $M^{1:1}$ for each mode of the system as described in section \ref{sec:S4}.

Figure \ref{fig:S5_I}(b) shows the work done by non-conservative contributions along the first modal family of conservative orbits parametrised with the non dimensional frequency $\bar{\omega}$. The black solid line is the resistance $R(\bar{\omega})$, while coloured lines represent $W^{1:1}(e,\bar{\omega})$ for three forcing amplitudes. According to Proposition \ref{prop:MPFADclass}, we find that two orbits bifurcate from the conservative one when the lines illustrating $W^{1:1}(e,\bar{\omega})$ lay in the grey zone of this plot, i.e., when $W^{1:1}(e,\bar{\omega})>R(\bar{\omega})$. No orbit bifurcates in the white area and unique solutions appear at intersection points between coloured lines and the black one. We also note that $A_{1,e}$ is never zero, except when $\bar{\omega}=1$. Similar trends and considerations hold for the other modes, except for the last one. For that mode, the active work contribution of the forcing is very small compared to the dissipative terms: the forcing is nearly orthogonal to the mode shape. Thus, no orbits arise from the conservative limit for the forcing amplitude ranges under investigation.

Figure \ref{fig:S5_II}(c) shows the curves $\Gamma_1(\bar{\omega})$ for the first five modes of the system using different colours; all of them show a strictly increasing monotonic trend. Thus, according to Proposition \ref{prop:ridgeprop}, ridge orbits are $O(\varepsilon)$ approximations for maximal responses in $\omega$ and $a$, since all the conservative backbone curves can be parametrised with both quantities. By selecting a forcing value in Fig. \ref{fig:S5_II}(c), we can predict the frequencies and the amplitudes of the maximal responses in the forced-damped setting. Moreover, since the damping is linear and proportional, ridges are defined as
\begin{equation}
\label{eq:lindamprid}
e= \frac{R(\bar{\omega})}{A_{1,e}(\bar{\omega})}=\alpha \frac{1}{A_{1,e}(\bar{\omega})}\int_0^{\tau(\bar{\omega})} \langle \dot{q}_0(t;p(\bar{\omega}) ) , K \dot{q}_0(t;p(\bar{\omega}) ) \rangle dt ,
\end{equation}
where we denoted with $K$ the stiffness matrix of system (\ref{eq:mechsysex}) and expressed initial conditions $p$, periods $\tau$ and coefficients $A_{1,e}$ as functions of the non-dimensional frequency $\bar{\omega}$. From Eq. (\ref{eq:lindamprid}), we obtain that the location of maximal frequency responses close to backbone curves is determined by the ratio between the forcing amplitude parameter $e$ and the damping term $\alpha$, with $O(\varepsilon)$ accuracy.
\begin{figure}[t!]
	\centering
	\includegraphics[width=1\textwidth]{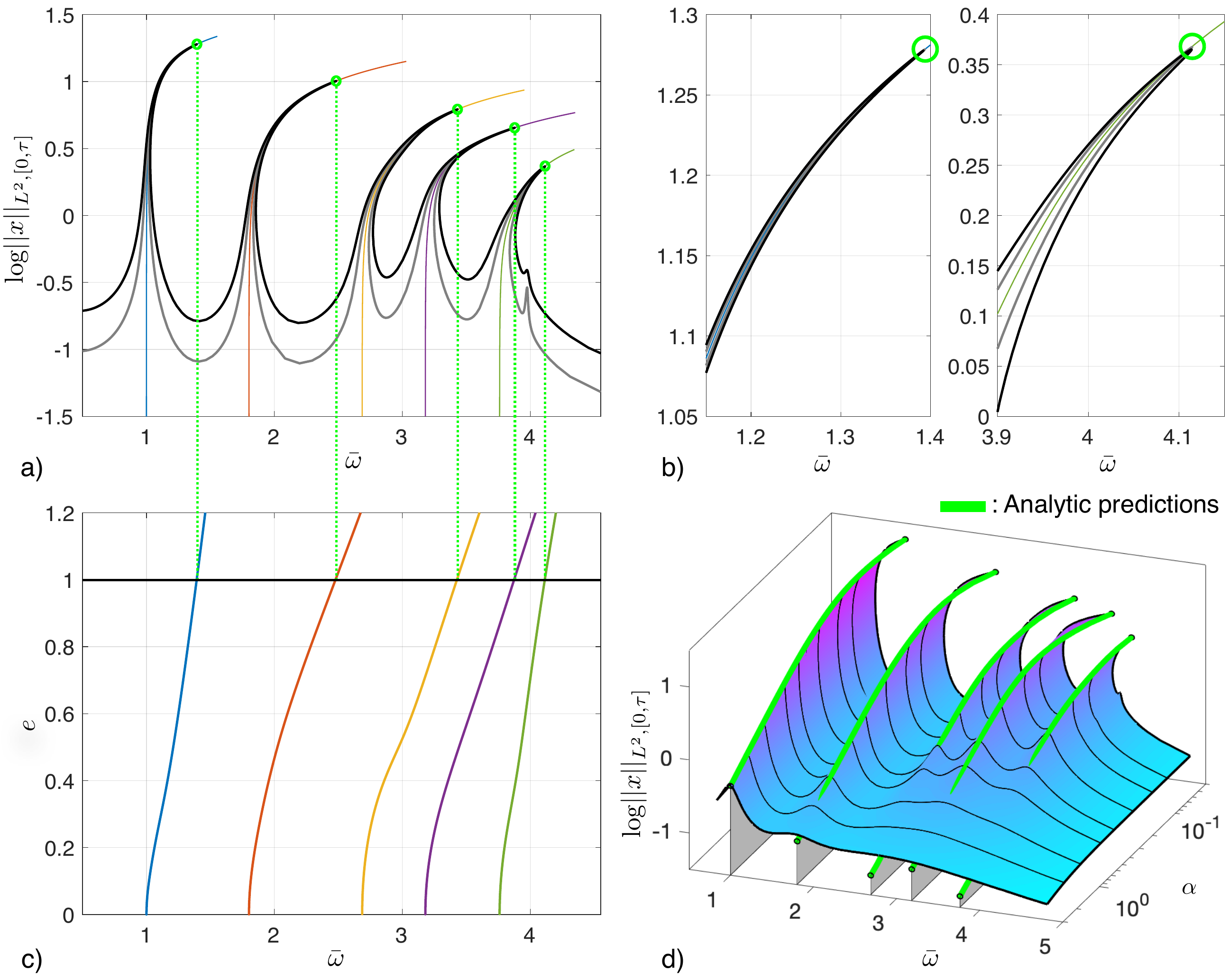}
	\caption{Plots (a) and (b) shows frequency responses with $\alpha=0.04$ and $e=1$ for $\varepsilon=0.05$, grey line, and $\varepsilon=0.1$, black line. The second plot zooms near the first and fifth peaks of the first plot. The five relevant conservative periodic orbit families are highlighted with coloured lines. Plot (c) shows the ridges $\mathcal{R}_1$ for each mode in different colours and the black line represents the forcing amplitude parameter of the frequency response in (a), so that, carrying over this intersection frequencies with green dotted lines, we obtain an analytic approximation for turning points. These approximations are described by green circles in (c). Plot (d) shows the frequency response surface with $e=1$ and $\varepsilon=0.1$, varying the proportional damping term $\alpha$ completed with conservative families in grey surfaces and analytic predictions for maxima in green.}
	\label{fig:S5_II}
\end{figure}

These theoretical findings are confirmed by the direct numerical computation of frequency responses presented in Fig. \ref{fig:S5_II}. To obtain them, we continue in frequency an initial guess acquired through numerical integration for a forcing frequency away from resonance with any of the linearised natural frequencies. The existence of this orbit is guaranteed by the asymptotic stability of the origin when $\varepsilon>0$ and $e=0$. Plot (a) in Fig. \ref{fig:S5_II} shows two frequency sweeps for $e=1$ and for $\varepsilon = 0.05$ (grey line), $0.1$ (black line), while this plot is zoomed in (b) around the first and fifth peaks. The sixth mode shows some tiny responses with the rightmost peaks in these two frequency sweeps, more evident for the case $\varepsilon = 0.05$ where the physical damping is lower. Figures \ref{fig:S5_II}(a) and \ref{fig:S5_II}(b) are completed with our analytic predictions in green for the maxima. By imposing the forcing parameter in the ridges of Fig. \ref{fig:S5_II}(c), we obtain the frequencies of each mode around which maximal response occur that are validated when carried over with green dotted lines in Fig. \ref{fig:S5_II}(a). Moreover, Fig. \ref{fig:S5_II}(d) shows the frequency response surface keeping $e=1$ and varying the damping value\footnote{For purposes of better illustration, we decided to sweep with the damping parameter $\alpha$ instead of the forcing amplitude one.} for two orders of magnitude. Green curves show analytic predictions for the maxima that closely approximate the peaks of this surface.
\begin{figure}[t]
\centering
\includegraphics[width=1\textwidth]{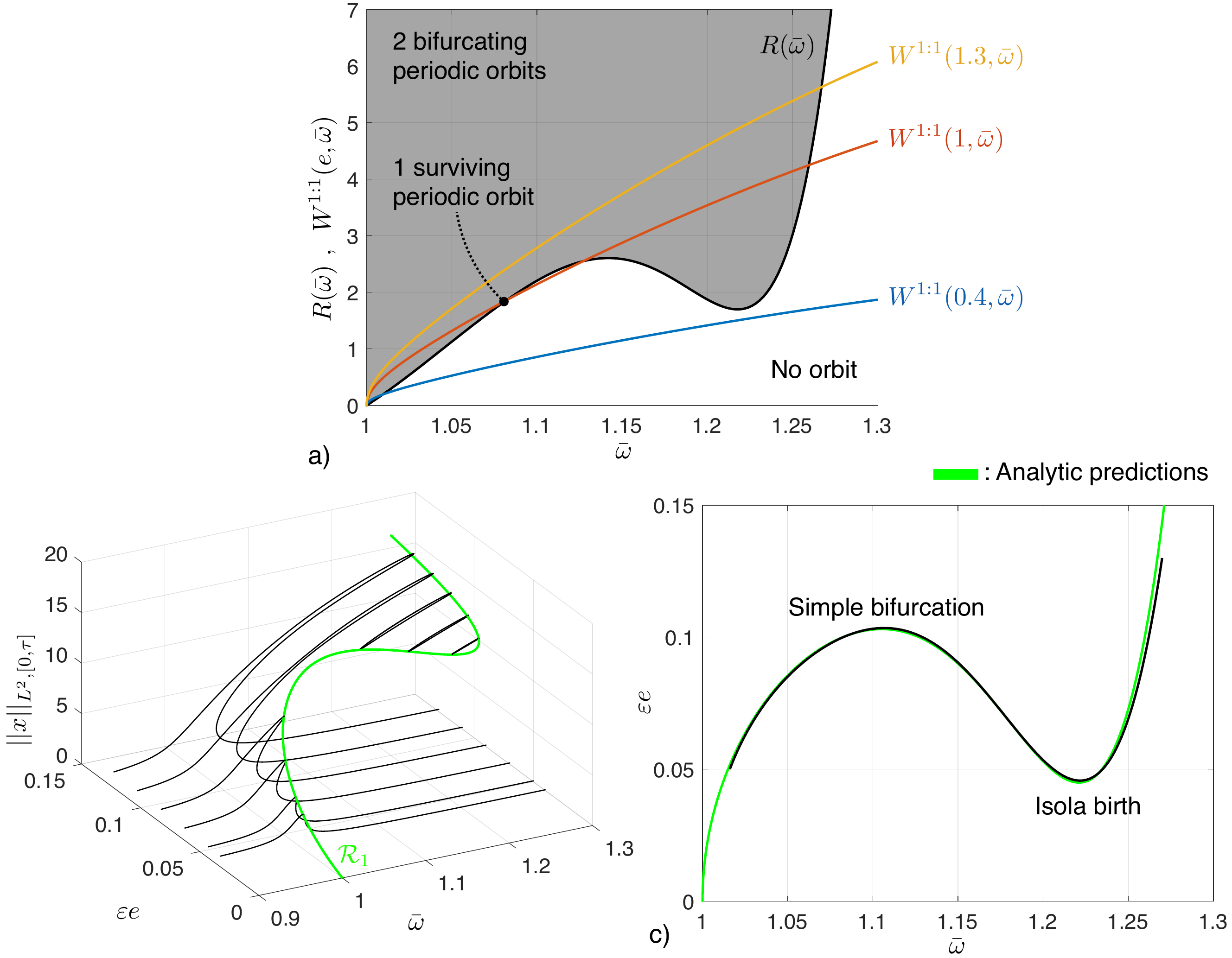}
\caption{Plot (a) shows the Melnikov analysis for $\alpha=0.2481$, $\beta=-1.085$ and $\gamma=0.8314$ regarding the first mode. Plot (b) shows frequency responses varying the forcing amplitude parameter and fixing $\varepsilon=0.1$ with the ridge curve $\mathcal{R}_1$. The latter is compared in plot (c) with the relation between the $e$ and $\bar{\omega}$ obtained through the numerical continuation of saddle-nodes orbits occurring close to the maximal point of the frequency response.}
\label{fig:S5_III}
\end{figure}

\subsection{Resonant external forcing with nonlinear damping}
We now repeat the analysis of the previous section including also the nonlinear damping characteristic of the connecting elements. In order to break the monotonic trend of the resistance in the linear damping case, cf. Fig. \ref{fig:S5_I}(b), we select $\alpha=0.2481$, $\beta=-1.085$ and $\gamma=0.8314$. We also restrict our attention solely to the first mode of the system.

The Melnikov analysis is reported in Figure \ref{fig:S5_III}(a), which outlines a behaviour change for increasing forcing amplitudes. Indeed, an isola birth occurs at $e\approx 0.4$ as was also displayed in Fig. \ref{fig:S3_IM3}. The branch persists up to connecting with the main branch for $e\approx 1$ through a simple bifurcation.

These predictions are confirmed by the numerical simulations shown in Figures \ref{fig:S5_III}(b) and \ref{fig:S5_III}(c). The former illustrates several frequency responses for different physical forcing amplitudes, with $\varepsilon=0.1$. The green line is the ridge $\mathcal{R}_1$, also plotted in Fig. \ref{fig:S5_III}(c), and the two singular bifurcations show up at its folds, as explained in Proposition \ref{prop:ridgeprop}. From a computational perspective, main branches of the frequency response are computed with the same strategy of the previous section. For isolated branches, we obtain initial guesses from a numerical continuation in $e$ of saddle-node periodic orbits\footnote{This functionality is directly available in the periodic orbit toolbox of \textsc{coco} \cite{Dankowicz2013} through the constructor \texttt{ode\_SN2SN}.} that started near the maximal response of the frequency sweep at $e=1.3$. We also plot the relation between frequency and forcing amplitude in this latter numerical continuation with the black line of Fig. \ref{fig:S5_III}(c). This curve is $O(\varepsilon)$-close to the ridge (in green), which was obtained solely from the knowledge of the conservative limit. We remark that a 18-core workstation with 2.3 GHz processors required 18 minutes and 15 seconds to compute the black curve, while the green curve took 1 minute and 45 seconds to compute.

\section{Conclusion}
We have developed an analytic criterion that relates conservative backbone curves to forced-damped frequency responses in multi-degree-of-freedom mechanical system with small external forcing and damping. Our procedure uses a perturbation approach starting from the conservative limit to evaluate the persistence or bifurcation of periodic orbits in the forced-damped setting. We have shown that this problem can be reduced to the analysis of the zeros of a Melnikov-type function. In a general setting, we proved that, if a simple zero of the Melnikov function exists, generically two periodic orbits bifurcate the conservative limit. We also characterised quadratic zeros and eventual singular bifurcations that may arise. Our results assume the forcing to be periodic and small, but otherwise allow for arbitrary types of damping and forcing. In addition, our analysis yields analytic criteria for the creation of subharmonics, superharmonics and ultrasubharmonics arising from small forcing and damping.

When applied specifically to mechanical systems, the Melnikov function turns out to be the leading-order term in the equation expressing energy balance over one oscillation period. In this context, we have worked out the Melnikov function in detail for the typical case of purely sinusoidal forcing combined with an arbitrary dissipation. Our method shows that either two, one or no orbits can arise from an orbit of the conservative limit. Moreover, ridge curves allow to identify forcing amplitudes and orbits of conservative backbone curves that are close to bifurcations phenomena of the frequency response. Thus, saddle-node bifurcations of frequency continuations, maximal responses and isolas can be efficiently predicted directly from the analysis of the conservative limit of the system. Our analysis also justifies the phase-lag quadrature criterion of \cite{Peeters2011a} in a general setting, without the assumptions of synchronous motion and linear damping.

We have confirmed these theoretical findings by numerical simulations. Specifically, we have considered a nonlinear mechanical system with six degrees of freedom, and implemented our Melnikov analysis on the six families of periodic orbits emanating from an equilibrium. We have verified our results both for linear and nonlinear damping. In the latter case, we successfully predicted the generation of isolated branches in the frequency response. Our six-degree-of-freedom example illustrates that the analytic tools developed here do not require the conservative limit of the mechanical system to be integrable. Indeed, one can apply the present Melnikov function approach directly to periodic orbit families obtained from numerical continuation in the conservative limit of the system.

Our present analysis is limited to well-defined conservative one-parameter families of periodic orbits subject to small damping and periodic external forcing. Perturbations of degenerate cases or resonance interactions, which can be identified by analysing the monodromy matrix of a conservative orbit, would require the analysis of a more general, multi-dimensional bifurcation function. A rigorous approach for tackling such problems can be found in \cite{ChiconeR2000}.
\vskip12pt

\noindent\textbf{Acknowledgements.} We are grateful to Harry Dankowicz and Jan Sieber for organizing the Advanced Summer School on Continuation Methods for Nonlinear Problems 2018 with the financial support from the ASME Design Engineering Division and the ASME Technical Committee on Multibody Systems and Nonlinear Dynamics. We are also thankful to Thomas Breunung and Shobhit Jain for their careful proofreading of this paper.

\begin{appendix}
\section{Proofs of the theorems in section \ref{sec:S3}}
\label{app:A1}
\subsection{Preparatory results}
We first need some technical results to set the stage for the proofs of the theorems stated in section \ref{sec:S3}. For a one-parameter family $\mathcal{P}$ of periodic orbits emanating from a $m$-normal periodic orbit $\mathcal{Z}$, the smooth map $\mathcal{T}:\mathbb{R}\rightarrow\mathbb{R}$ describes the minimal period $\mathcal{T}(\lambda)$ of each orbit. Introducing a Poincaré section $\mathcal{S}$ passing through the point of $z\in\mathcal{P}$, we can find a smooth curve $\vartheta_\mathcal{S}:\mathbb{R}\rightarrow V\cup \mathcal{P}\pitchfork\mathcal{S}$ parametrising initial conditions under $\lambda$ where $V\subset\mathbb{R}^n$ is a open neighbourhood of $z$. For more detail on these mappings, we refer the reader to \cite{Teschl2012}. We denote the tangent space of the 2-dimensional manifold $\mathcal{P}$ at the point $z$ by $\mathbb{T}_z\mathcal{P}$, to which $f(z)$ belongs due to invariance. We consider vectors as column ones and we use the superscript $^*$ to denote transposition. We refer to the column and row spaces of a matrix $A$ with the notations col$(A)$ and row$(A)$, respectively.

Next, we discuss a useful result on the properties of the monodromy matrices for normal periodic orbits. Specifically, we restate Proposition 2.1 of \cite{ChiconeR2000} for the setting of $m$-normal period orbits.
\begin{proposition}
\label{prop:fact}
Consider an $m$-normal periodic orbit $\mathcal{Z}$ of period $m\tau$ in the periodic orbit family $\mathcal{P}$. The smooth invertible matrix families $K,\,R: \mathcal{Z}\rightarrow\mathbb{R}^{n\times n}$, defined as
\begin{equation}
\begin{array}{c}
K(z)=[\,K_r(z) \,\,\,\, v(z) \,\,\,\, f(z)] , \,\,\,\,\,\,\,\, v(z)\in\mathbb{T}_z\mathcal{P}:\langle v(z), f(z)\rangle=0, \, \langle v(z), v(z)\rangle =1 , \\ \\
\mathrm{col}(K_r(z))= \mathbb{T}_z^\perp\mathcal{P}, \,\,\,\,\,\, R(z)=\begin{bmatrix} R_r(z) \\ -f^*(z) \\ DH(z) \end{bmatrix}, \,\,\,\,\,\, \mathrm{row}(R_r(z))= \mathrm{span}^\perp\{f(z)\,,\,DH(z)\},
\end{array}
\end{equation}
satisfy the identity
\begin{equation}
R(z)(\Pi^m(z)-I)K(z)=\begin{bmatrix} A_r(z) & 0 & 0 \\ w^*(z) & m\tau_v & 0 \\ 0 & 0 & 0 \end{bmatrix} ,  \,\,\,\,\,\,  w^*(z)=-f^*(z)(\Pi^m(z)-I)K_r(z) ,
\end{equation}
where $A_r(z)\in\mathbb{R}^{(n-2)\times(n-2)}$ is always invertible and the value $\tau_v\in\mathbb{R}$ describes the shear effect within $\mathcal{P}$ being zero if $\mathcal{Z}$ is a normal periodic orbit of case (b) and nonzero for case (a) of Definition \ref{def:mNPO}.
\end{proposition}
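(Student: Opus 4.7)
The plan is to verify the claimed block form of $R(z)(\Pi^m(z)-I)K(z)$ entry by entry using three basic invariances, and then prove invertibility of $A_r(z)$ as the one nontrivial step relying on $m$-normality. The first invariance is that $f(z)$ is a right $+1$-eigenvector of $\Pi^m(z)$: the variational equation propagates $f$ along the orbit, so $(\Pi^m(z)-I)f(z)=0$ kills the third column. The second is that $DH(z)$ is a left $+1$-eigenvector: differentiating the conservation law $H(x_0(t;z))=H(z)$ with respect to $z$ and evaluating at $t=m\tau$ yields $DH(z)\Pi^m(z)=DH(z)$, so $DH(z)(\Pi^m(z)-I)=0$ kills the third row.

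Next I would analyse $(\Pi^m(z)-I)v(z)$ via the family itself. Differentiating the periodicity identity $x_0(m\mathcal{T}(\lambda);\vartheta_{\mathcal{S}}(\lambda))=\vartheta_{\mathcal{S}}(\lambda)$ with respect to $\lambda$ at $\lambda_0$ yields
\begin{equation*}
(\Pi^m(z)-I)\vartheta_{\mathcal{S}}'(\lambda_0)=-m\mathcal{T}'(\lambda_0)\,f(z).
\end{equation*}
Since $\mathbb{T}_z\mathcal{P}=\mathrm{span}\{f(z),\vartheta_{\mathcal{S}}'(\lambda_0)\}$ and $\langle v(z),f(z)\rangle=0$, decomposing $v(z)$ in this basis gives $(\Pi^m(z)-I)v(z)=\eta(z)f(z)$ for some scalar $\eta(z)$. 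Then the $(1,2)$ entry vanishes because $R_r(z)f(z)=0$, while the $(2,2)$ entry equals $-\eta(z)|f(z)|^2$, which I would take as the definition of $m\tau_v$. In case $(a)$ of Definition \ref{def:mNPO}, the condition $\mu_{g,m}=1$ forces $\ker(\Pi^m(z)-I)=\mathrm{span}(f(z))$, so $v(z)$ cannot be an eigenvector and $\eta(z)\neq 0$. In case $(b)$, the displayed identity combined with the hypothesis $f(z)\notin\mathrm{range}(\Pi^m(z)-I)$ forces $\mathcal{T}'(\lambda_0)=0$, making $\vartheta_{\mathcal{S}}'(\lambda_0)$ and hence $v(z)$ a genuine eigenvector, so $\eta(z)=0$.

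The main obstacle I anticipate is invertibility of the square block $A_r(z)=R_r(z)(\Pi^m(z)-I)K_r(z)$, which I would establish by injectivity. If $A_r(z)u=0$, then $(\Pi^m(z)-I)K_r(z)u\in\ker R_r(z)=\mathrm{span}\{f(z),DH(z)\}$. The left-eigenvector property $DH(z)(\Pi^m(z)-I)=0$ places $\mathrm{range}(\Pi^m(z)-I)$ inside $\ker DH(z)$; combining this with $DH(z)\cdot f(z)=0$ and $|DH(z)|^2\neq 0$ rules out any $DH(z)$-component of $(\Pi^m(z)-I)K_r(z)u$, reducing the equation to $(\Pi^m(z)-I)K_r(z)u=c\,f(z)$. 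In case $(a)$, since $DH(z)\cdot f(z)=0$ places $f(z)$ in the range, I can pick a generalized eigenvector $v_0$ with $(\Pi^m(z)-I)v_0=f(z)$, write $K_r(z)u=cv_0+\beta f(z)$, and exploit both $K_r(z)u\perp f(z),v(z)$ and the fact that $\langle v(z),v_0\rangle\neq 0$ (which holds because $v(z)/\eta(z)$ is itself a valid choice of $v_0$) to force $c=\beta=0$. In case $(b)$, the hypothesis $f(z)\notin\mathrm{range}(\Pi^m(z)-I)$ forces $c=0$ directly; then $K_r(z)u\in\ker(\Pi^m(z)-I)=\mathrm{span}\{f(z),v(z)\}$ must also satisfy $K_r(z)u\perp f(z),v(z)$, yielding $K_r(z)u=0$. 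In either case $u=0$, closing the argument.
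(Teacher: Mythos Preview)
Your proposal is correct and follows essentially the same strategy as the paper: establish $(\Pi^m-I)f=0$ and $DH(\Pi^m-I)=0$ to kill the last column and row, differentiate the periodicity identity $x_0(m\mathcal{T}(\lambda);\vartheta_\mathcal{S}(\lambda))=\vartheta_\mathcal{S}(\lambda)$ to show $(\Pi^m-I)v$ is a multiple of $f$, and then argue invertibility of $A_r$ from $m$-normality. The only noteworthy differences are presentational: the paper chooses the Poincar\'e section $\mathcal{S}$ orthogonal to $f(z)$ so that $v(z)=D\vartheta_\mathcal{S}(\lambda_z)/\|D\vartheta_\mathcal{S}(\lambda_z)\|$ directly and obtains the explicit formula $\tau_v=D\mathcal{T}(\lambda_z)/\|D\vartheta_\mathcal{S}(\lambda_z)\|$, whereas you keep $v$ abstract and read $\tau_v$ off the $(2,2)$ entry; and your injectivity argument for $A_r$ is spelled out more carefully (eliminating the $DH$-component via $\mathrm{range}(\Pi^m-I)\subset DH^\perp$ and then handling the $f$-component case by case), while the paper compresses this into the statement that $\mathrm{col}(K_r)$ is complementary to $\ker(\Pi^m-I)$ and that $f,DH\notin V_z=(\Pi^m-I)\mathrm{col}(K_r)$.
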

\begin{proof} 
For proving this factorisation result, we need to characterise kernel and range of the monodromy operator for $\mathcal{Z}$ based at $z$. First, by \cite{Doedel2003}, we have that
\begin{equation}
\label{eq:basKRc}
f(z)\in\mathrm{ker}(\Pi^m(z)-I) , \,\,\,\,\,\,\,\, DH(z)\in\mathrm{range}^\perp(\Pi^m(z)-I) .
\end{equation}
Without loss of generality, we introduce a Poincaré section $\mathcal{S}$ orthogonal to $f(z)$ and the value $\lambda_z$ identifies $\mathcal{Z}$ leading to $z=\vartheta_\mathcal{S}(\lambda_z)$,  $\tau=\mathcal{T}(\lambda_z)$. Consider the identity
\begin{equation}
x_0(m\mathcal{T}(\lambda);\vartheta_\mathcal{S}(\lambda))=\vartheta_\mathcal{S}(\lambda) ,
\end{equation}
whose differentiation in $\lambda$ and evaluation at $\lambda=\lambda_z$ yields
\begin{equation}
(\Pi^m(z)-I)D\vartheta_\mathcal{S}(\lambda_z)=-mD\mathcal{T}(\lambda_z)f(z) .
\end{equation}
We then have $v(z)=D\vartheta_\mathcal{S}(\lambda_z)/||D\vartheta_\mathcal{S}(\lambda_z)||$ leading to a parametrisation-independent relation and to the definition of $\tau_v$ in the form
\begin{equation}
\begin{array}{lcr}
(\Pi^m(z)-I)v(z)=-m\tau_v f(z) , & \,\,\,\,  &\displaystyle \tau_v=\frac{D\mathcal{T}(\lambda_z)}{||D\vartheta(\lambda_z)||} .
\end{array}
\end{equation}
If the orbit $\mathcal{Z}$ belongs to the case \textit{(a)} of Definition \ref{def:mNPO}, $\tau_v$ cannot be zero, otherwise the kernel of $\Pi^m(z)-I$ is two dimensional. Instead, for case \textit{(b)}, $\tau_v$ must be zero, otherwise there exists a nonzero vector $v(z)$ whose image is parallel to $f(z)$. In both cases, the column space of $K_r(z)$ always lays in the complement of the kernel of $\Pi^m(z)-I$ by construction, so it maps through $\Pi^m(z)-I$ a $n-2$ dimensional linear subspace $V_z$ such that $f(z),\,DH(z)\notin V_z$. Since the row space of the matrix $R_r(z)$ does not contain the latter vectors, the matrix $A_r(z)=R_r(z)(\Pi^m(z)-I)K_r(z)$ is invertible $\forall z\in\mathcal{Z}$. 
\end{proof}

We can now state and prove the following reduction theorem.
\begin{theorem}
\label{thm:redthm}
Perturbed solutions of Eq. (\ref{eq:DisplMapC}) in the form of Eq. (\ref{eq:orbeclosex}) are (locally) in one-to-one correspondence with the zeros of the bifurcation function
	\begin{equation}
	\label{eq:BifFun}
	B^{m:l}_L(s,\varepsilon)=M^{m:l}(s)+O(\varepsilon) ,
	\end{equation}
	where the leading-order term, defined in Eq. (\ref{eq:MelFun}), is independent from the choice of the mapping $L$ used in the last equation of system (\ref{eq:DisplMapC}).
\end{theorem}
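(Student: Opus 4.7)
My plan is to establish Theorem \ref{thm:redthm} by a Lyapunov--Schmidt reduction applied to the map $F := \Delta_{l,L}$ at the base point $(\xi_0, T_0, 0) := (x_0(s;p), \tau m/l, 0)$. The first task is to analyse the Jacobian $J := D_{(\xi, T)} F(\xi_0, T_0, 0)$, which has the block form
\begin{equation*}
J = \begin{pmatrix} \Pi^m(\xi_0) - I & l\, f(\xi_0) \\ D_1 L(\xi_0, m\tau) & l\, D_2 L(\xi_0, m\tau) \end{pmatrix},
\end{equation*}
with $D_1, D_2$ the partial derivatives in the first (vector) and second (scalar) arguments of $L$. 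The time-translation direction $(f(\xi_0), 0)$ lies in $\ker J$: the top block vanishes since $\Pi^m f = f$, and the bottom row vanishes because $L$ is required to be invariant along the orbit, so that $D_1 L\cdot f = 0$. I would then invoke Proposition \ref{prop:fact}, combined with the $m$-normality hypothesis and the admissibility conditions on $L$ discussed after Definition \ref{def:mNPO}, to show that $\ker J$ is exactly one-dimensional, uniformly across cases \textit{(a)} and \textit{(b)}. Dually, conservation of $H$ under the conservative flow yields $DH(\xi_0)(\Pi^m(\xi_0) - I) = 0$ and $DH(\xi_0)\, f(\xi_0) = 0$, so the row $(DH(\xi_0),\, 0)$ annihilates every column of $J$; since $J$ is square, the cokernel is also one-dimensional and spanned by this row. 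The decisive observation is that this cokernel direction is independent of the choice of $L$.

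With these kernel and cokernel descriptions in hand, I would carry out the standard Lyapunov--Schmidt reduction. Parametrising the kernel by the phase $s$ (so $\xi_0 = x_0(s;p)$) and fixing complements to $\ker J$ in the domain and to $\mathrm{range}\, J$ in the codomain, the implicit function theorem solves the $n$ range-valued equations for $(\xi, T) = (x_0(s;p),\, \tau m/l) + O(\varepsilon)$ as smooth functions of $(s, \varepsilon)$. The remaining scalar equation, obtained by projecting $F$ onto the cokernel via $(DH(\xi_0),\, 0)$, defines the bifurcation function
\begin{equation*}
B^{m:l}_L(s, \varepsilon) := \varepsilon^{-1}\, DH(x_0(s;p))\, \Delta_l\bigl(\xi(s,\varepsilon),\, T(s,\varepsilon),\, \varepsilon\bigr).
\end{equation*}
Since $\Delta_l$ vanishes at $\varepsilon = 0$, the factor $\varepsilon^{-1}$ produces a function that extends smoothly to $\varepsilon = 0$, whose zeros are in local one-to-one correspondence with solutions of $F = 0$ of the form in Eq. \eqref{eq:orbeclosex}. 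Although the solved-out quantities $\xi(s,\varepsilon)$ and $T(s,\varepsilon)$ depend on the choice of $L$, the outer projection onto $DH$ does not.

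Finally, I would compute the leading-order term. Expanding $\Delta_l$ in $\varepsilon$ via variation of constants gives
\begin{equation*}
\partial_\varepsilon \Delta_l\big|_{\varepsilon=0} = (\Pi^m(\xi_0) - I)\,\xi_1 + l\, f(\xi_0)\, T_1 + \int_0^{m\tau} \Phi(m\tau, t)\, g\bigl(x_0(t;\xi_0),\, t;\, \tau m/l,\, 0\bigr)\, dt,
\end{equation*}
where $\Phi(\cdot,\cdot)$ is the state transition matrix of the variational equation along $\mathcal{Z}$ and $(\xi_1, T_1)$ are the first-order corrections of $\xi$ and $T$. Projecting onto $DH(\xi_0)$ annihilates the first two terms by the cokernel property, leaving only the integral. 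Using the adjoint identity $DH(x_0(t; \xi))\, Y(t; \xi) = DH(\xi)$, obtained by differentiating $H(x_0(t;\xi)) = H(\xi)$ in $\xi$, one derives $DH(\xi_0)\, \Phi(m\tau, t) = DH(x_0(t; \xi_0))$; after a shift of the integration variable by $s$, this reproduces exactly the integrand of Eq. \eqref{eq:MelFun} and yields $B^{m:l}_L(s, \varepsilon) = M^{m:l}(s) + O(\varepsilon)$, manifestly $L$-independent at leading order. I expect the main obstacle to be the reduction to a one-dimensional kernel in case \textit{(b)} of Definition \ref{def:mNPO}, where $\Pi^m - I$ itself already has a two-dimensional kernel; there the admissibility restrictions on $L$ (so that it selects the family parameter transverse to the orbit) and the block factorisation of Proposition \ref{prop:fact} must be used carefully to remove the extra kernel direction.
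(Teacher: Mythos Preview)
Your proposal is correct and follows essentially the same Lyapunov--Schmidt strategy as the paper's proof; the paper carries it out more explicitly via a coordinate change built from the factorization of Proposition~\ref{prop:fact}, showing directly that a reduced $n\times n$ matrix $A(z)$ is invertible (which is exactly your one-dimensional kernel claim, established through the identity $\langle D_\xi L, v\rangle + ml\tau_v D_T L = 1/\|D\vartheta_{\mathcal S}\|$), and then simplifies the leading term using the same adjoint identity you invoke. Your identification of the cokernel as $\mathrm{span}\{(DH(\xi_0),0)\}$ and your anticipation that case~\textit{(b)} requires the admissibility of $L$ to cut down the extra kernel direction match the paper precisely.
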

\begin{proof} 
With the shorthand notation $z=x_0(s;p)$, we consider the following change of coordinates
\begin{equation}
\label{eq:fcoordc}
\hat{\delta} \in\mathbb{R}^{n-1} , \,\,\,\, \hat{\sigma}\in\mathbb{R} , \,\,\,\,  \begin{pmatrix}\xi \\ T  \end{pmatrix}=\hat{\varphi}(\hat{\delta},\hat{\sigma},s)=\begin{cases} z+K(z)\begin{pmatrix} \hat{\delta} \\ 0 \end{pmatrix}=z+K_{\mathbb{T}\mathcal{Z}}(z)\hat{\delta} \\ (\tau m+\hat{\sigma})/l \end{cases} ,
\end{equation}
where $K(z)$ is the matrix defined in Proposition \ref{prop:fact}. By construction, $D\hat{\varphi}(0,\hat{\sigma},s)$ is invertible for any $s\,,\,\hat{\sigma}\in\mathbb{R}$. Then, we rescale $\hat{\delta}=\varepsilon\delta$, $\hat{\sigma}=\varepsilon\sigma$ and, by calling $\eta=(\delta,\sigma,s)$, we denote $\varphi(\eta,\varepsilon)=\hat{\varphi}(\varepsilon\delta,\varepsilon\sigma,s)$. Note that $\varphi(\cdot,\varepsilon)$ is a family of diffeomorphisms for $\varepsilon$ nonzero small enough. Note also that col$(K_{\mathbb{T}\mathcal{Z}}(z))=\mathbb{T}^\perp_z\mathcal{Z}$.

By imposing this coordinate change and Taylor expanding in $\varepsilon$ Eq. (\ref{eq:DisplMapC}), we obtain $\Delta_{l,L}(\varphi(\eta,\varepsilon),\varepsilon)=\varepsilon \Delta_1(\eta,\varepsilon)$. The latter mapping is of $C^{r-1}$ class and reads
\begin{equation}
\Delta_1(\eta,\varepsilon)=
\begin{cases} D_\xi\Delta(z,m\tau/l,0)K_{\mathbb{T}\mathcal{Z}}(z) \delta +D_T\Delta(z,m\tau/l,0)\sigma +D_\varepsilon\Delta(z,m\tau/l,0) \\
D_\xi L(z,m\tau)K_{\mathbb{T}\mathcal{Z}}(z) \delta+ D_T L(z,m\tau)\sigma
\end{cases}+O(\varepsilon), 
\end{equation}
in which
\begin{equation}
\begin{array}{lllll}
D_\xi\Delta(z,m\tau/l,0) &=& x_{\xi}(m\tau;z,m\tau /l,0)-I&=&\Pi^m(z)-I \\  D_T\Delta(z,m\tau/l,0)&=& f(z)+x_T(m\tau;z,m\tau /l,0)&=&f(z)\\ D_\varepsilon\Delta(z,m\tau/l,0)&=&x_{\varepsilon}(m\tau;z,m\tau /l,0)&=&\chi(z)
\end{array} ,
\end{equation}
where we have denoted by $x_{\kappa}(m\tau;z,m\tau l,0)$ the solution of the first variational problem in the parameter $\kappa$ at time $m\tau$. The solution of the first variation in the period is zero since the period dependence of the vector field only appears at $O(\varepsilon)$. Exploiting Proposition \ref{prop:fact}, we project $\Delta_1$ using the invertible matrix $R_{ext}(z)$ defined as
\begin{equation}
R_{ext}(z)=\begin{bmatrix}
R_r(z)& 0 \\ -f^*(z) & 0 \\ 0 &1 \\ DH(z)&0 
\end{bmatrix} ,
\end{equation}
in order to obtain
\begin{equation}
\label{eq:LSRsplit}
\begin{array}{c}
\displaystyle \Delta_{1}'(\eta,\varepsilon)=R_{ext}(z)\Delta_1(\eta,\varepsilon)=\begin{pmatrix} \Delta_r(\delta,\sigma,s,\varepsilon) \\ \Delta_c(\delta,\sigma,s,\varepsilon) \end{pmatrix}=\begin{cases} A(z)\begin{pmatrix} \delta \\ \sigma \end{pmatrix} +b(z)\\
\langle DH(z),\chi(z) \rangle \end{cases} +O(\varepsilon) , \\ \\
A(z)=\begin{bmatrix}
A_r(z)& 0 & 0\\ w^*(z) & m\tau_v & -1 \\ D_\xi L(z,m\tau)K_r(z) & \langle D_\xi L(z,m\tau),v(z) \rangle & lD_T L(z,m\tau) 
\end{bmatrix} , \\ \\
b(z)=\begin{pmatrix}
R_r(z)\chi(z) \\  -\langle f(z),\chi(z) \rangle \\ 0 \\ 
\end{pmatrix} .
\end{array}
\end{equation}
We now aim to show that $A(z)$ is an invertible matrix for any $s$. Due to its block matrix structure, its determinant reads
\begin{equation}
\mathrm{det}\big(A(z)\big)=\mathrm{det}\big(A_r(z)\big)\big(\langle D_\xi L(z,m\tau),v(z)\rangle+ml\tau_v  D_{\tau}L(z,m\tau)\big) ,
\end{equation}
where the first factor is nonzero due to Proposition \ref{prop:fact}. For the second factor, we use the identity
\begin{equation}
L(x_0(t;\vartheta_{\mathcal{S}}(\lambda)),m\mathcal{T}(\lambda))= \lambda ,
\end{equation}
whose differentiation in $\lambda$, evaluation $\lambda=\lambda_z$ and division by $||D\vartheta_{\mathcal{S}}(\lambda_z)||$ yields
\begin{equation}
\langle D_\xi L(z,m\tau),v(z)\rangle+ml\tau_v  D_{T}L(z,m\tau)=1/||D\vartheta_{\mathcal{S}}(\lambda_z)|| ,
\end{equation} 
proving that $A(z)$ is then invertible. Hence, we can solve for $\delta$ and $\sigma$ in the leading order term of $\Delta_r$ for any $s$, so that the implicit function theorem ($\Delta_1'\in C^{r-1}$ with $r\geq 2$) guarantees that we can locally express $\delta=\delta_r(s,\varepsilon)$ and $\sigma=\sigma_r(s,\varepsilon)$ such that $\Delta_r(\delta_r(s,\varepsilon),\sigma_r(s,\varepsilon),s,\varepsilon)=0$. Thus, we have shown that the perturbed solutions of $\Delta_{l,L}(\xi,T,\varepsilon)=0$ have a one-to-one correspondence with the zeros of the bifurcation function $B^{m:l}_L(s,\varepsilon)$ defined as
\begin{equation}
B^{m:l}_L:\mathbb{R}\times \mathbb{R} \rightarrow \mathbb{R}, \,\,\,\,\,\,  B^{m:l}_L(s,\varepsilon)=\Delta_c(\delta_r(s,\varepsilon),\sigma_r(s,\varepsilon),s,\varepsilon)=M^{m:l}(s)+O(\varepsilon) ,
\end{equation}
where $M^{m:l}(s) = \langle DH(z),\chi(z) \rangle $. Moreover, this function does not depend on the mapping $L$ used as a constraint in Eq. (\ref{eq:DisplMapC}).

We now aim to simplify the Melnikov-type function $M^{m:l}(s)$ to the form in Eq. (\ref{eq:MelFun}). Denoting with $Y(t;x_0(s;p))$ the solution of the first variational problem for the vector field $f(x)$, we write explicitly the solution of the first variational problem in $\varepsilon$ (see \cite{Chicone2000}) leading to
\begin{equation}
\begin{array}{rl}
M^{m:l}(s) =& \langle \, DH(x_0(s;p))\, ,\, x_{\varepsilon}(m\tau;x_0(s;p),m\tau/l,0) \rangle \\ \\
=& \displaystyle \big\langle \, DH(x_0(s;p))\, , \, Y(m\tau;x_0(s;p)) \cdot \\ \\ & \displaystyle \cdot \,\int_0^{m\tau}Y^{-1}(t;x_0(s;p))\, g(x_0(t;x_0(s;p)),t;m\tau/l,0) dt \, \big\rangle ,
\end{array} 
\end{equation}
and we recall that the dynamics on an energy surface $H(x)=H(p)$ (that acts as a codim. 1 invariant manifold), is characterised by (see Proposition 3.2 in \cite{Li2000} for a proof)
\begin{equation}
\label{eq:imandyn}
DH(x_0(t+s;p))=DH(x_0(s;p))Y^{-1}(t+s;p) , \,\,\,\,\,\,  DH(p)Y(m\tau;p)=DH(p).
\end{equation}
Equation (\ref{eq:imandyn}) leads to
\begin{equation}
\begin{array}{ll}
M^{m:l}(s)&\displaystyle=\big\langle \, DH(x_0(s;p)) \, ,\, \int_0^{m\tau}Y^{-1}(t+s;p)\, g(x_0(t+s;p),t;m\tau/l,0) dt \, \big\rangle \\ \\
&\displaystyle=\int_0^{m\tau} \big\langle\, DH(x_0(s;p))\, ,\,Y^{-1}(t+s;p) \, g(x_0(t+s;p),t;m\tau/l,0) \,\big\rangle dt \\ \\
&\displaystyle=\int_0^{m\tau} \big\langle\, DH(x_0(t+s;p))\, , \,  g(x_0(t+s;p),t;m\tau/l,0)\,\big\rangle dt ,
\end{array} 
\end{equation} 
and this function is clearly smooth and $m\tau$-periodic. 
\end{proof} 

\subsection{Proof of Theorem \ref{thm:stpert}}
\label{app:A1stpert}
\begin{proof}
Thanks to Theorem \ref{thm:redthm}, we are able to reduce the persistence problem of Eq. (\ref{eq:DisplMapC}) to the study of $B^{m:l}_L(s,\varepsilon)$. The zeros of this function mark the existence of periodic orbits for $\varepsilon$ small enough which smoothly connect to $\mathcal{Z}$ at $\varepsilon=0$. Note that, if the $M^{m:l}(s)\equiv 0$, then no conclusions for persistence can be drawn solely from $M^{m:l}(s)$. Indeed, we need to analyse the $O(\varepsilon)$ term in $B^{m:l}_L$.

If the Melnikov function remains bounded away from zero, then we conclude the last statement thanks to the fact that no zeros of the bifurcation function exists for $\varepsilon$ small enough.

We now analyse the case of simple zeros. Assuming that the conditions in Eq. (\ref{eq:transvzero}) hold for $s_0$, the implicit function theorem guarantees that we can express $s=s(\varepsilon)$ from the bifurcation function $B^{m:l}_L(s,\varepsilon)$. According to the proof of Theorem \ref{thm:redthm}, we can define 
\begin{equation}
\delta(\varepsilon)=\delta_r(s(\varepsilon),\varepsilon), \,\,\,\,\,\, \sigma(\varepsilon)=\sigma_r(s(\varepsilon),\varepsilon), \,\,\,\,\,\, \eta(\varepsilon)=(\delta(\varepsilon),\sigma(\varepsilon),s(\varepsilon)),
\end{equation}
such that $\Delta_{l,L}(\varphi(\eta(\varepsilon),\varepsilon),\varepsilon)=0$ for a sufficiently small neighbourhood $C_0\subset\mathbb{R}$. Hence, we can express the initial conditions and the periods
\begin{equation}
\begin{cases}
\xi (\varepsilon) = x_0(s(\varepsilon);p)+\varepsilon K_{\mathbb{T}\mathcal{Z}}\big(x_0(s(\varepsilon);p)\big)\delta(\varepsilon)=x_0(s_0;p) +O(\varepsilon)\\ 
lT(\varepsilon)=m\tau + \varepsilon \sigma(\varepsilon) =m\tau +O(\varepsilon) 
\end{cases}
\end{equation}
of periodic orbits solving system (\ref{eq:NAutSysG}) and satisfying $L(\xi (\varepsilon),lT(\varepsilon))=\lambda$ for small enough $\varepsilon>0$.

Finally, the second statement of Theorem \ref{thm:stpert} is a direct consequence of the intermediate value theorem. Namely, the existence of a simple zero for the Melnikov function implies that $ M^{m:l}(s)$ is not constant and there exist points $s_1=s_0-\epsilon$ and $s_2=s_0+\epsilon$ such that $M^{m:l}(s_1) M^{m:l}(s_2)<0$ for $\epsilon>0$ small enough. Due to periodicity, we also have $ M^{m:l}(s_2) M^{m:l}(s_1+m\tau)<0$. Thus, there exists at least another $\hat{s}_0\in (s_2,\, s_1+m\tau)$ such that $M^{m:l}(\hat{s}_0)= 0$ due Bolzano's theorem and it must be a zero at which the function changes sign, i.e., a topologically transverse zero. 
\end{proof}
\begin{remark}
\label{rmk:smoothpers}
Theorem \ref{thm:stpert} guarantees smooth persistence only. There may be degenerate cases where there exist periodic orbits of system (\ref{eq:NAutSysG}) that are still $O(\varepsilon)$-close to $\mathcal{Z}$, but they can only be continuously connected to the latter or not connected at all. The Melnikov function in (\ref{eq:MelFun}) cannot prove existence of such orbits.
\end{remark}
\begin{remark}
\label{rmk:melnder}
To analyse the type of zeros of the Melnikov function, it is convenient to have closed formulae for its derivatives. The first derivative can be computed as
\begin{equation}
DM^{m:l}(s)=-\int_0^{m\tau} \big\langle\, DH(x_0(t+s;p))\, , 
\, \partial_t g(x_0(t+s;p),t;\tau m/l,0)\,\big\rangle dt ,
\end{equation}
given that
\begin{equation}
\begin{array}{ll}
\displaystyle DM^{m:l}(s)&=\displaystyle \int_0^{m\tau} D_s \big\langle\, DH\, , 
\, g\,\big\rangle dt 
=\displaystyle \int_0^{m\tau} D_t \big\langle\, DH\, , 
\, g\,\big\rangle dt  - \int_0^{m\tau} \big\langle\, DH\, , 
\, \partial_t g\,\big\rangle dt \\ \\
&=-\displaystyle \int_0^{m\tau} \big\langle\, DH\, , 
\, \partial_t g\,\big\rangle dt ,
\end{array}
\end{equation}
which is again a smooth periodic function. Thus, a transverse zero $s_0$ of $M^{m:l}(s)$ must satisfy:
\begin{equation}
\begin{array}{l}
\displaystyle \int_0^{m\tau} \big\langle\, DH(x_0(t+s_0;p))\, , 
\, g(x_0(t+s_0;p),t;\tau m/l,0)\,\big\rangle dt=0 , \\ \\
\displaystyle \int_0^{m\tau} \big\langle\, DH(x_0(t+s_0;p))\, , 
\, \partial_t g (x_0(t+s_0;p),t;\tau m/l,0)\,\big\rangle dt \neq0 .
\end{array}
\end{equation}
Assuming enough smoothness, the second derivative of $M^{m:l}(s)$ is likewise
\begin{equation}
D^2_{ss} M^{m:l}(s)=\int_0^{m\tau} \big\langle\, DH(x_0(t+s;p))\, , 
\, \partial^2_{tt} g (x_0(t+s;p),t;\tau m/l,0)\,\big\rangle dt 
\end{equation}
A similar formula follows for high-order derivatives.
\end{remark}
\begin{remark}
\label{rmk:fnhyp}
Note that if the orbit family can be parametrised with the period, one can directly insert the exact resonance condition into the displacement map. In this case, the method developed in \cite{ChiconeR2000} applies in a straightforward way in what the authors call a non-degenerate case. Compared with the discussion in that reference, we simplified the final Melnikov function.
\end{remark}

\subsection{Proof of Theorem \ref{thm:ftpert}}
\label{app:A1ftpert}
Once the reduction to a scalar bifurcation function has been performed as in Theorem \ref{thm:redthm}, the statements in Theorem \ref{thm:ftpert} follow from results of the bifurcation analysis outlined in \cite{GolubitskySchaeffer1985}. Specifically, one can look at Theorem 2.1 and Table 2.3 in Chapter IV to recognise the bifurcation problem. In that reference, the singular bifurcation isola birth is called \textit{isola centre}.

We further remark that a saddle-node bifurcation persists in the perturbed setting. Indeed, defining
\begin{equation}
B_{sn}(s,\kappa,\varepsilon)=\begin{cases}
B^{m:l}_L(s,\kappa,\varepsilon) \\
D_s B^{m:l}_L(s,\kappa,\varepsilon) 
\end{cases} ,
\end{equation}
we find that
\begin{equation}
B_{sn}(s_0,\kappa_0,0)=0 , \,\,\,\,\,\, \mathrm{det}\big( D_{s,\kappa} B_{sn}(s_0,\kappa_0,0)\big) = -D^2_{ss} M^{m:l} (s_0,\kappa_0) D_{\kappa}M^{m:l}(s_0,\kappa_0) \neq 0 .
\end{equation}
Therefore, the implicit function theorem applies, guaranteeing that a locally unique orbit persists at $s_{sn}=s_0+O(\varepsilon)$, $\kappa_{sn}=\kappa_0+O(\varepsilon)$.

\section{Melnikov function with monoharmonic, space-independent forcing}
\label{app:A2}
In this Appendix, we show the derivations that lead to Eq. (\ref{eq:WPFAD}). By substituting the Fourier series of Eq. (\ref{eq:Fseries}) into (\ref{eq:WPFAD}), we find that
\begin{equation}
\begin{array}{rl}
w^{m:l}(s,e)=&-\displaystyle e\sum_{k=1}^\infty  \int_0^{m\tau }k\omega  \langle a_k ,f_e\rangle  
\sin\left( k\omega  (t+s)\right)\cos\left( \frac{l}{m} \omega t\right)dt+ \\ \\
&\displaystyle +e\sum_{k=1}^\infty \int_0^{m\tau }k\omega  \langle b_k ,f_e\rangle  
\cos\left( k\omega  (t+s)\right)\cos\left( \frac{l}{m} \omega t\right)dt .
\end{array} 
\end{equation}
Expanding using trigonometric addition formulae, we obtain
\begin{equation}
\label{eq:Wsimp}
\begin{array}{rl}
w^{m:l}(s,e)=&-\displaystyle e\sum_{k=1}^\infty k\omega  \langle a_k ,f_e\rangle  \cos\left( k\omega  s \right)
\int_0^{m\tau }  \sin\left( k\omega t \right)\cos\left( \frac{l}{m} \omega t\right)dt+ \\ \\
&-\displaystyle e\sum_{k=1}^\infty k\omega  \langle a_k ,f_e\rangle  \sin\left( k\omega s \right)
\int_0^{m\tau }  \cos\left( k\omega t \right)\cos\left( \frac{l}{m} \omega t\right)dt+ \\ \\
&+\displaystyle e\sum_{k=1}^\infty k\omega  \langle b_k ,f_e\rangle  \cos\left( k\omega  s\right)
\int_0^{m\tau }  \cos\left( k\omega t \right)\cos\left( \frac{l}{m} \omega t\right)dt+ \\ \\
&-\displaystyle e\sum_{k=1}^\infty k\omega  \langle b_k ,f_e\rangle  \sin\left( k\omega  s \right)
\int_0^{m\tau }  \sin\left( k\omega t \right)\cos\left( \frac{l}{m} \omega t\right)dt
\end{array} .
\end{equation}
We recall the following trigonometric integral identities with $k\neq j$ :
\begin{equation}
\label{eq:trigid}
\begin{array}{c}
\displaystyle \int_0^{\tau}   \sin\left( k\omega t \right)\cos\left( j\omega t\right)=\int_0^{\tau}   \sin\left( k\omega t \right)\sin\left( j\omega t\right)=\int_0^{\tau}   \cos\left( k\omega t \right)\cos\left( j\omega t\right)=0 , \\ \\
\displaystyle \int_0^{\tau}   \sin\left( k\omega t \right)\cos\left( k\omega  t\right)=0 , \,\,\,\,\,\, \int_0^{\tau} \sin^2\left( k\omega t \right)=\int_0^{\tau}   \cos^2\left( k\omega t \right)=\frac{\tau} {2} .
\end{array}
\end{equation}
Thus, the integrals in the first and last summations in Eq. (\ref{eq:Wsimp}) are always zero. We first discuss the case $m\neq 1$. We call $m\tau =\tau_o$ so that Eq. (\ref{eq:Wsimp}) becomes
\begin{equation}
\label{eq:Wsimpm}
\begin{array}{rl}
w^{m:l}(s,e)=&-\displaystyle e\sum_{k=1}^\infty k\omega  \langle a_k ,f_e\rangle  \sin\left( k\omega  s\right)
\int_0^{\tau_o}  \cos\left( \frac{2km\pi}{\tau_o}t \right)\cos\left( \frac{2l\pi}{\tau_o} t\right)dt+ \\ \\
&+\displaystyle e\sum_{k=1}^\infty k\omega  \langle b_k ,f_e\rangle  \cos\left( k\omega  s\right)
\int_0^{\tau_o}  \cos\left( \frac{2km\pi}{\tau_o}t \right)\cos\left( \frac{2l\pi}{\tau_o} t\right)dt .
\end{array} 
\end{equation}
Therefore, to obtain nonzero integrals in Eq. (\ref{eq:Wsimpm}), we need that $km=l$ according to Eq. (\ref{eq:trigid}). However, since we choose $l$ and $m$ to be positive integers and relatively prime, that condition will never hold. We then conclude that $w^{m:l}(s,e)\equiv 0$ for $m\neq 1$.

For $m=1$, only the terms for $k=l$ can be nonzero in Eq. (\ref{eq:Wsimpm}), resulting in
\begin{equation}
\label{eq:Wsimp1}
w^{m:l}(s)= -l\pi \langle a_l ,f_e\rangle  \sin\left( l\omega s \right)+l\pi\langle b_l ,f_e\rangle  \cos\left( l\omega s \right) .
\end{equation}
Thus, we recover Eq. (\ref{eq:WPFAD}) with the proper definitions of $A_{l,e}$ and $\alpha_{l,e}$.
\end{appendix}

\bibliographystyle{unsrt}
\bibliography{Biblio}

\begin{thebibliography}{10}

\bibitem{Rosenberg1962}
R.M. Rosenberg.
\newblock The normal modes of nonlinear n-degree-of-freedom systems.
\newblock {\em Journal of Applied Mechanics}, 29:7 -- 14, 1962.

\bibitem{Vakakis1997}
A.F. Vakakis.
\newblock Non-linear normal modes, ({NNM}s) and their application in vibration
  theory: an overview.
\newblock {\em Mechanical Systems and Signal Processing}, 11(1):3 -- 22, 1997.

\bibitem{Vakakis2001}
A.F. Vakakis, editor.
\newblock {\em Normal Modes and Localization in Nonlinear Systems}.
\newblock Springer, Dordrecht, 2001.

\bibitem{Vakakis2008}
A.F. Vakakis, L.I. Manevitch, Y.V. Mikhlin, V.N. Pilipchuk, and A.A. Zevin.
\newblock {\em Normal Modes and Localization in Nonlinear Systems}.
\newblock Wiley Blackwell, 1 2008.

\bibitem{Avramov2011}
K.V. Avramov and Y.V. Mikhlin.
\newblock Nonlinear normal modes for vibrating mechanical systems. review of
  theoretical developments.
\newblock {\em ASME Applied Mechanics Reviews}, 63(6), 2011.

\bibitem{Avramov2013}
K.V. Avramov and Y.V. Mikhlin.
\newblock Review of applications of nonlinear normal modes for vibrating
  mechanical systems.
\newblock {\em ASME Applied Mechanics Reviews}, 65(2), 2013.

\bibitem{Kerschen2014}
G.~Kerschen.
\newblock {\em Modal Analysis of Nonlinear Mechanical Systems}, volume 555 of
  {\em CISM International Centre for Mechanical Sciences}.
\newblock Springer-Verlag Wien, 2014.

\bibitem{NayfehM2007}
A.H. Nayfeh and D.T. Mook.
\newblock {\em Nonlinear Oscillations}.
\newblock Wiley, 2007.

\bibitem{Peeters2011b}
M.~Peeters, G.~Kerschen, and J.C. Golinval.
\newblock Modal testing of nonlinear vibrating structures based on nonlinear
  normal modes: experimental demonstration.
\newblock {\em Mechanical Systems and Signal Processing}, 25(4):1227 -- 1247,
  2011.

\bibitem{Szalai2017}
R.~Szalai, D.~Ehrhardt, and G.~Haller.
\newblock Nonlinear model identification and spectral submanifolds for
  multi-degree-of-freedom mechanical vibrations.
\newblock {\em Proceedings of the Royal Society of London A: Mathematical,
  Physical and Engineering Sciences}, 473(2202), 2017.

\bibitem{Touze2006}
C.~Touz{é} and M.~Amabili.
\newblock Nonlinear normal modes for damped geometrically nonlinear systems:
  Application to reduced-order modelling of harmonically forced structures.
\newblock {\em Journal of Sound and Vibration}, 298(4):958 -- 981, 2006.

\bibitem{Sombroek2018}
C.S.M. Sombroek, P.~Tiso, L.~Renson, and G.~Kerschen.
\newblock Numerical computation of nonlinear normal modes in a modal derivative
  subspace.
\newblock {\em Computers \& Structures}, 195:34 -- 46, 2018.

\bibitem{Polunin2016}
P.~M. {Polunin}, Y.~{Yang}, M.~I. {Dykman}, T.~W. {Kenny}, and S.~W. {Shaw}.
\newblock Characterization of mems resonator nonlinearities using the ringdown
  response.
\newblock {\em Journal of Microelectromechanical Systems}, 25(2):297--303,
  April 2016.

\bibitem{Carpineto2014}
N.~Carpineto, W.~Lacarbonara, and F.~Vestroni.
\newblock Hysteretic tuned mass dampers for structural vibration mitigation.
\newblock {\em Journal of Sound and Vibration}, 333(5):1302 -- 1318, 2014.

\bibitem{Mojahed2018}
A.~Mojahed, K.~Moore, L.A. Bergman, and A.F. Vakakis.
\newblock Strong geometric softening-hardening nonlinearities in an oscillator
  composed of linear stiffness and damping elements.
\newblock {\em International Journal of Non-Linear Mechanics}, 107:94 -- 111,
  2018.

\bibitem{Liu2019}
Y.~Liu, A.~Mojahed, L.A. Bergman, and A.F. Vakakis.
\newblock A new way to introduce geometrically nonlinear stiffness and damping
  with an application to vibration suppression.
\newblock {\em Nonlinear Dynamics}, 96(3):1819--1845, May 2019.

\bibitem{Habib2018}
G.~Habib, G.I. Cirillo, and G.~Kerschen.
\newblock Isolated resonances and nonlinear damping.
\newblock {\em Nonlinear Dynamics}, 93(3):979--994, 2018.

\bibitem{Hill2016}
T.L. Hill, S.A. Neild, and A.~Cammarano.
\newblock An analytical approach for detecting isolated periodic solution
  branches in weakly nonlinear structures.
\newblock {\em Journal of Sound and Vibration}, 379:150 -- 165, 2016.

\bibitem{Ponsioen2019}
S.~Ponsioen, T.~Pedergnana, and G.~Haller.
\newblock Analytic prediction of isolated forced response curves from spectral
  submanifolds.
\newblock {\em Nonlinear Dynamics}, Jun 2019.

\bibitem{Hill2017}
T.L. Hill, A.~Cammarano, S.A. Neild, and D.A.W. Barton.
\newblock Identifying the significance of nonlinear normal modes.
\newblock 473(2199), 2017.

\bibitem{Sanders2007}
J.A. Sanders, F.~Verhulst, and J.~Murdock.
\newblock {\em Averaging Methods in Nonlinear Dynamical Systems}, volume~59 of
  {\em Applied Mathematical Sciences}.
\newblock Springer-Verlag New York, 2 edition, 2007.

\bibitem{Touze2004}
C.~Touzé, O.~Thomas, and A.~Chaigne.
\newblock Hardening/softening behaviour in non-linear oscillations of
  structural systems using non-linear normal modes.
\newblock {\em Journal of Sound and Vibration}, 273(1):77 -- 101, 2004.

\bibitem{Neild2011}
S.A. Neild and D.J. Wagg.
\newblock Applying the method of normal forms to second-order nonlinear
  vibration problems.
\newblock {\em Proceedings of the Royal Society of London A: Mathematical,
  Physical and Engineering Sciences}, 467(2128):1141--1163, 2011.

\bibitem{Hill2015}
T.L. Hill, A.~Cammarano, S.A. Neild, and D.J. Wagg.
\newblock Interpreting the forced responses of a two-degree-of-freedom
  nonlinear oscillator using backbone curves.
\newblock {\em Journal of Sound and Vibration}, 349:276 -- 288, 2015.

\bibitem{Vakakis2018}
A.F. Vakakis and A.~Blanchard.
\newblock Exact steady states of the periodically forced and damped duffing
  oscillator.
\newblock {\em Journal of Sound and Vibration}, 413:57--65, 1 2018.

\bibitem{Haller2016}
G.~Haller and S.~Ponsioen.
\newblock Nonlinear normal modes and spectral submanifolds: existence,
  uniqueness and use in model reduction.
\newblock {\em Nonlinear Dynamics}, 86(3):1493--1534, 2016.

\bibitem{Breunung2018}
T.~Breunung and G.~Haller.
\newblock Explicit backbone curves from spectral submanifolds of forced-damped
  nonlinear mechanical systems.
\newblock {\em Proceedings of the Royal Society of London A: Mathematical,
  Physical and Engineering Sciences}, 474(2213), 2018.

\bibitem{Ponsioen2018}
S.~Ponsioen, T.~Pedergnana, and G.~Haller.
\newblock Automated computation of autonomous spectral submanifolds for
  nonlinear modal analysis.
\newblock {\em Journal of Sound and Vibration}, 420:269 -- 295, 2018.

\bibitem{Lyapunov1992}
A.M. Lyapunov.
\newblock The general problem of the stability of motion.
\newblock {\em International Journal of Control}, 55(3):531 -- 534, 1992.

\bibitem{Renson2016a}
L.~Renson, G.~Kerschen, and B.~Cochelin.
\newblock Numerical computation of nonlinear normal modes in mechanical
  engineering.
\newblock {\em Journal of Sound and Vibration}, 364:177 -- 206, 2016.

\bibitem{Peeters2009}
M.~Peeters, R.~Viguié, G.~Sérandour, G.~Kerschen, and J.-C. Golinval.
\newblock Nonlinear normal modes, part {II}: toward a practical computation
  using numerical continuation techniques.
\newblock {\em Mechanical Systems and Signal Processing}, 23(1):195 -- 216,
  2009.
\newblock Special Issue: Non-linear Structural Dynamics.

\bibitem{Grolet2012}
A.~Grolet and F.~Thouverez.
\newblock On a new harmonic selection technique for harmonic balance method.
\newblock {\em Mechanical Systems and Signal Processing}, 30:43 -- 60, 2012.

\bibitem{Dankowicz2013}
H.~Dankowicz and F.~Schilder.
\newblock {\em Recipes for Continuation}.
\newblock Society for Industrial and Applied Mathematics, 2013.

\bibitem{Peeters2011a}
M.~Peeters, G.~Kerschen, and J.C. Golinval.
\newblock Dynamic testing of nonlinear vibrating structures using nonlinear
  normal modes.
\newblock {\em Journal of Sound and Vibration}, 330(3):486 -- 509, 2011.

\bibitem{Ehrhardt2016}
David~A. Ehrhardt and Matthew~S. Allen.
\newblock Measurement of nonlinear normal modes using multi-harmonic stepped
  force appropriation and free decay.
\newblock {\em Mechanical Systems and Signal Processing}, 76-77:612 -- 633,
  2016.

\bibitem{Peter2018}
S.~Peter, M.~Scheel, M.~Krack, and R.I. Leine.
\newblock Synthesis of nonlinear frequency responses with experimentally
  extracted nonlinear modes.
\newblock {\em Mechanical Systems and Signal Processing}, 101:498 -- 515, 2018.

\bibitem{Renson2016b}
L.~Renson, A.~Gonzalez-Buelga, D.A.W. Barton, and S.A. Neild.
\newblock Robust identification of backbone curves using control-based
  continuation.
\newblock {\em Journal of Sound and Vibration}, 367:145 -- 158, 2016.

\bibitem{Kerschen2009}
G.~Kerschen, M.~Peeters, J.C. Golinval, and A.F. Vakakis.
\newblock Nonlinear normal modes, part {I}: A useful framework for the
  structural dynamicist.
\newblock {\em Mechanical Systems and Signal Processing}, 23(1):170 -- 194,
  2009.

\bibitem{GH1983}
J.~Guckenheimer and P.J. Holmes.
\newblock {\em Nonlinear Oscillations, Dynamical Systems, and Bifurcations of
  Vector Fields}, volume~42 of {\em Applied Mathematical Sciences}.
\newblock Springer-Verlag New York, 1983.

\bibitem{MO2017}
K.R. Meyer and D.C. Offin.
\newblock {\em Introduction to Hamiltonian Dynamical Systems and the N-body
  Problem}, volume~90 of {\em Applied Mathematical Sciences}.
\newblock Springer-Verlag New York, 3 edition, 2017.

\bibitem{Malkin1949}
I.G. Malkin.
\newblock On poincar\'e's theory of periodic solutions.
\newblock {\em Akad. Nauk SSSR. Prikl. Mat. Meh.}, 13:633 -- 646, 1949.

\bibitem{Loud1959}
W.S. Loud.
\newblock Periodic solutions of a perturbed autonomous system.
\newblock {\em Annals of Mathematics}, 70(3):490 -- 529, 1959.

\bibitem{Farkas1994}
M.~Farkas.
\newblock {\em Periodic Motions}, volume 104 of {\em Applied Mathematical
  Sciences}.
\newblock Springer-Verlag New York, 1994.

\bibitem{MoserZehnder2005}
J.~Moser and E.J. Zehnder.
\newblock {\em Notes on Dynamical Systems}, volume~12 of {\em Courant Lecture
  Notes}.
\newblock American Mathematical Society, 2005.

\bibitem{Poincare1892}
H.~Poincaré.
\newblock {\em Les Méthodes Nouvelles de la Mécanique Céleste}.
\newblock Gauthier-Villars et Fils, Paris, 1892.

\bibitem{Arnold1964}
V.I. Arnol’d.
\newblock Instability of dynamical systems with many degrees of freedom.
\newblock {\em Dokl. Akad. Nauk SSSR}, 156(1):9 -- 12, 1964.

\bibitem{Melnikov1963}
V.K. Melnikov.
\newblock On the stability of a center for time-periodic perturbations.
\newblock {\em Tr. Mosk. Mat. Obs.}, 12:3 -- 52, 1963.

\bibitem{Yagasaki1996}
K.~Yagasaki.
\newblock The melnikov theory for subharmonics and their bifurcations in forced
  oscillations.
\newblock {\em SIAM Journal on Applied Mathematics}, 56(6):1720--1765, 1996.

\bibitem{Veerman1985}
P.~Veerman and P.~Holmes.
\newblock The existence of arbitrarily many distinct periodic orbits in a two
  degree of freedom hamiltonian system.
\newblock {\em Physica D: Nonlinear Phenomena}, 14(2):177--192, 1985.

\bibitem{Veerman1986}
P.~Veerman and P.~Holmes.
\newblock Resonance bands in a two degree of freedom hamiltonian system.
\newblock {\em Physica D: Nonlinear Phenomena}, 20(2):413 -- 422, 1986.

\bibitem{Yagasaki1999}
K.~Yagasaki.
\newblock Periodic and homoclinic motions in forced, coupled oscillators.
\newblock {\em Nonlinear Dynamics}, 20(4):319 -- 359, 1999.

\bibitem{Kunze2000}
M.~Kunze.
\newblock {\em Non-Smooth Dynamical Systems}, volume 1744 of {\em Lecture Notes
  in Mathematics}.
\newblock Springer-Verlag Berlin Heidelberg, 2000.

\bibitem{Shaw1989a}
S.W. Shaw and R.H. Rand.
\newblock The transition to chaos in a simple mechanical system.
\newblock {\em International Journal of Non-Linear Mechanics}, 24(1):41 -- 56,
  1989.

\bibitem{Shaw1989b}
J.~{Shaw} and S.W. {Shaw}.
\newblock {The onset of chaos in a two-degree-of-freedom impacting system}.
\newblock {\em Journal of Applied Mechanics}, 56:168, 1989.

\bibitem{Chicone1994}
C.~Chicone.
\newblock Lyapunov-{S}chmidt reduction and {M}elnikov integrals for bifurcation
  of periodic solutions in coupled oscillators.
\newblock {\em Journal of Differential Equations}, 112(2):407 -- 447, 1994.

\bibitem{Chicone1995}
C.~Chicone.
\newblock A geometric approach to regular prturbation theory with an
  application to hydrodynamics.
\newblock {\em Transactions of the American Mathematical Society}, 12(2):4559
  -- 4598, 1995.

\bibitem{ChiconeR2000}
M.B.H. Rhouma and C.~Chicone.
\newblock On the continuation of periodic orbits.
\newblock {\em Methods and Applications of Analysis}, 7(1):85 -- 104, 2000.

\bibitem{Buica2012}
A.~Buic\u{a}, J.~Llibre, and O.~Makarenkov.
\newblock Bifurcations from nondegenerate families of periodic solutions in
  lipschitz systems.
\newblock {\em Journal of Differential Equations}, 252(6):3899 -- 3919, 2012.

\bibitem{Chicone2000}
C.~Chicone.
\newblock {\em Ordinary Differential Equations with Applications}, volume~34 of
  {\em Texts in Applied Mathematics}.
\newblock Springer-Verlag New York, 1982.

\bibitem{Sepulchre1997}
J.A. Sepulchre and R.S. MacKay.
\newblock Localized oscillations in conservative or dissipative networks of
  weakly coupled autonomous oscillators.
\newblock {\em Nonlinearity}, 10(3):679, 1997.

\bibitem{Doedel2003}
F.J. Muñoz-Almaraz, E.~Freire, J.~Galán, E.~Doedel, and A.~Vanderbauwhede.
\newblock Continuation of periodic orbits in conservative and hamiltonian
  systems.
\newblock {\em Physica D: Nonlinear Phenomena}, 181(1):1 -- 38, 2003.

\bibitem{ChowHale1982}
S.N. Chow and J.K. Hale.
\newblock {\em Methods of Bifurcation Theory}, volume 251 of {\em Grundlehren
  der mathematischen Wissenschaften}.
\newblock Springer-Verlag New York, 1982.

\bibitem{GolubitskySchaeffer1985}
M.~Golubitsky and S.~Schaeffer.
\newblock {\em Singularities and Groups in Bifurcation Theory}, volume~51 of
  {\em Applied Mathematical Sciences}.
\newblock Springer-Verlag New York, 1985.

\bibitem{Govaerts2000}
W.J.F. Govaerts.
\newblock {\em Numerical Methods for Bifurcations of Dynamical Equilibria}.
\newblock Society for Industrial and Applied Mathematics, 2000.

\bibitem{Teschl2012}
G.~Teschl.
\newblock {\em Ordinary Differential Equations and Dynamical Systems}, volume
  140 of {\em Graduate Studies in Mathematics}.
\newblock American Mathematical Society, 2012.

\bibitem{Li2000}
M.Y. Li and J.S. Muldowney.
\newblock Dynamics of differential equations on invariant manifolds.
\newblock {\em Journal of Differential Equations}, 168(2):295 -- 320, 2000.

\end{thebibliography}

\end{document}